\theoremstyle{plain}
\newtheorem{thm}{Theorem}[section]
\newtheorem{theorem}[thm]{Theorem}
\newtheorem{lemma}[thm]{Lemma}
\newtheorem{proposition}[thm]{Proposition}
\theoremstyle{definition}
\newtheorem{remark}[thm]{Remark}
\newtheorem{definition}[thm]{Definition}
\newtheorem{example}[thm]{Example}
\numberwithin{equation}{section}
\newcommand{\Aut}{{\rm Aut}}
\newcommand{\Ker}{{\rm Ker}}
\newcommand{\Diag}{{\rm diag}}
\newcommand{\SmallGroup}{{\rm SmallGroup}}
\newcommand{\GL}{{\rm GL}}
\newcommand{\PGL}{{\rm PGL}}
\newcommand{\PSL}{{\rm PSL}}
\newcommand{\Det}{{\rm det}}
\newcommand{\SL}{{\rm SL}}
\newcommand{\C}{{\mathbb C}}
\renewcommand{\P}{{\mathbb P}}
\newcommand{\Z}{{\mathbb Z}}
 \title{Automorphism groups of smooth cubic threefolds}
 \author{Li Wei and Xun Yu}
\address{Center for Applied Mathematics, Tianjin University, Weijin Road 92, Tianjin 300072, China}
\email{WeiLitju@163.com, xunyu@tju.edu.cn}
\begin{document}

\begin{abstract} In this paper, we classify groups which faithfully act on smooth cubic threefolds. It turns out that there are exactly $6$ maximal ones and  we describe them with explicit examples of target cubic threefolds.

\end{abstract}

\maketitle

\section{Introduction}\label{Intro}

Throughout this paper, we work over the complex number field $\mathbb{C}$. The purpose of this paper is to study the automorphism group of a smooth cubic threefold, an important counterexample to the three-dimensional  L\"{u}roth problem (\cite{CG72}).  Let $X\subset \P^4$ be a smooth cubic threefold defined by an irreducible homogeneous polynomial $F$.  It is known (\cite{MM63}) that the automorphism group $\Aut(X)$ is finite, every element of $\Aut(X)$ extends to an automorphism of the ambient space $\P^4$, and two smooth cubic threefolds are isomorphic if and only if they are projective linearly isomorphic. Classification of finite groups appearing as subgroups of $\Aut(X)$ seems still unkown (for studies of automorphism groups of smooth cubic hypersurfaces, see \cite{Se42}, \cite{Ad78}, \cite{Ho97}, \cite{Ro09},\cite{GL11}, \cite{Do12}, \cite{GL13}, \cite{Mo13}, \cite{HM14}, \cite{BCS16}, \cite{Fu16},\cite{DD18}, \cite{LZ19}, etc). Our main result is the following (see Section \ref{ss:6examples} for more details about the $6$ groups in the theorem)

\begin{theorem}[Theorem \ref{thm:Main}]
A finite group $G$ has a faithful action on a smooth cubic threefold if and only if $G$ is isomorphic to a subgroup of one of the following $6$ groups: $C_3^4\rtimes S_5$, $((C_3^2\rtimes C_3)\rtimes C_4)\times S_3$, $C_{24}$, $C_{16}$, $\PSL(2,11)$, $C_3\times S_5$.
\end{theorem}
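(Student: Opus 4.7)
The strategy is to classify faithful finite subgroups $G\subseteq\Aut(X)$ by first understanding the cyclic subgroups, then bounding the Sylow subgroups, and finally assembling the possible $G$. Since every automorphism of $X$ extends to the ambient $\P^4$, we have $G\hookrightarrow \PGL(5,\C)$; choose a lift $\widetilde G\subseteq\GL(5,\C)$ acting on $V=\C^5$ so that $F\in \mathrm{Sym}^3 V^{\vee}$ is semi-invariant under some character $\chi\colon\widetilde G\to\C^{\times}$. Smoothness of $X=V(F)$ is equivalent to $\nabla F$ not vanishing on $V\setminus\{0\}$, which for a fixed eigenspace decomposition translates into the requirement that sufficiently many $\chi$-semi-invariant cubic monomials appear in $F$.

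First I would handle cyclic subgroups. For $g\in\widetilde G$ of order $n$, diagonalize $g=\Diag(\zeta^{a_1},\dots,\zeta^{a_5})$ with $\zeta=e^{2\pi i/n}$. The $\chi(g)$-semi-invariant cubic monomials are exactly those $x_ix_jx_k$ with $a_i+a_j+a_k\equiv c\pmod n$ for the fixed weight $c$, and smoothness forces, for every coordinate $x_i$, the existence of a quadratic monomial $m_i$ such that $x_i m_i$ occurs in $F$. A finite enumeration of the resulting arithmetic constraints shows that the prime divisors of $|G|$ lie in $\{2,3,5,11\}$ and that element orders are severely restricted---in particular the maximal cyclic subgroups realized are $C_{16}$, $C_{24}$, $C_5$, and $C_{11}$. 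The Sylow $5$- and $11$-subgroups of $G$ are therefore cyclic of prime order, and an order-$11$ element forces $G$ into the $\PSL(2,11)$-branch via the unique (up to complex conjugation) irreducible $5$-dimensional representation of $\PSL(2,11)$ acting on the Klein cubic threefold.

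The main obstacle is the analysis of the $3$- and $2$-Sylow subgroups and their interaction. Order-$3$ elements on a smooth cubic threefold fall into a small number of eigenvalue types, and any non-cyclic $3$-subgroup of $\Aut(X)$ splits into two extremal branches: an elementary-abelian branch, realized maximally on the Fermat cubic $\sum_{i=1}^{5}x_i^3=0$ with stabilizer $C_3^4\rtimes S_5$, and a non-abelian Heisenberg branch $C_3^2\rtimes C_3$, whose normalizer among smooth-cubic stabilizers is the second maximal group $((C_3^2\rtimes C_3)\rtimes C_4)\times S_3$. For each extremal $3$-group I would compute its normalizer inside $\PGL(5,\C)$ intersected with the stabilizer of a smooth cubic, and show this matches one of the six listed groups. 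The remaining maximal candidates $C_{16}$, $C_{24}$, and $C_3\times S_5$ then arise from the cyclic classification together with the residual non-$3$-branch cases. To finish, I would exhibit six explicit smooth cubic threefolds realizing each of the six groups on the nose, and verify by inspection of the subgroup lattices that every faithfully acting finite $G$ embeds in one of these six groups with no further containments among the six themselves.
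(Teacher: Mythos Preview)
Your outline has the right shape at the top level (bound element orders, control Sylow subgroups, assemble), but there are two genuine gaps that would prevent it from going through as written.

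First, the phrase ``choose a lift $\widetilde G\subseteq\GL(5,\C)$'' hides a nontrivial theorem. A finite subgroup of $\PGL(5,\C)$ need not lift isomorphically to $\GL(5,\C)$; what you get for free is only a central extension. The paper establishes (Theorem~\ref{thm:Flift}) that every subgroup of $\Aut(X)$ for a smooth cubic threefold does admit an honest $F$-lifting with $A(F)=F$ (not merely semi-invariant), and the proof uses smoothness in an essential way via Proposition~\ref{pp:nsmcubic} to pin down the eigenvalues of order-$3$ elements. Without this, your character-theoretic constraints on $\widetilde G$ are being applied to the wrong group.

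Second, and more seriously, the assembly step is where almost all the work lies, and your proposal does not address it. Saying that the $3$-Sylow analysis ``splits into two extremal branches'' (elementary abelian versus Heisenberg) is too coarse: the paper finds twelve admissible $3$-groups (Theorem~\ref{thm:sylow3}), and the interaction with the $2$- and $5$-parts is delicate. Your plan to ``compute the normalizer inside $\PGL(5,\C)$ intersected with the stabilizer of a smooth cubic'' is not an algorithm---the smooth cubic is not fixed, so there is no single stabilizer to intersect with, and different cubics yield different normalizers. The paper instead proceeds by a systematic \emph{sub-test}: enumerate all candidate groups of order $2^a3^b5^c\le 2000$ (via GAP), discard those containing an already-excluded subgroup, and kill the survivors by comparing their character tables against the eigenvalue constraints in Table~\ref{tab:ab}. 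Orders above $2000$ are then reduced to smaller cases using Hall subgroups in the solvable case and composition factors in the non-solvable case. Your sentence ``verify by inspection of the subgroup lattices that every faithfully acting finite $G$ embeds in one of these six groups'' is exactly the statement to be proved, not a method for proving it.
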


The automorphism group of Klein cubic threefold is isomorphic to the finite simple group $\PSL(2,11)$ (\cite{Ad78}), and it is, up to isomorphism, the unique smooth cubic threefold admitting an automorphism of order $11$ (\cite{Ro09}). On the other hand, by a result of \cite{GL11}, all possible prime orders of automorphisms of smooth cubic threefolds are $2,3,5,11$. Therefore, in order to classify subgroups of $\Aut(X)$,  we are reduced to consider subgroups of order $2^a3^b5^c$. Our approch to classify such subgroups is the same as that of \cite{OY19} in which all possible groups of automorphisms of smooth quintic threefolds are classified.  By Matsumura-Monsky (\cite{MM63}), it suffices to consider finite subgroups $G\subset \PGL(5,\C)=\Aut(\P^4)$ such that $F$ is $G$-invariant (i.e., for each $g=[A] \in G$, $A(F)=\lambda_A  F$ for some $\lambda_A\in \C^*$, where $A\in \GL(5,\C)$ is a representative of $g$).  As in \cite{OY19}, we use the notion $F$-liftability (see Definition \ref{def:Flift} in Section \ref{ss:smlift}) to transfer classification problem in $\PGL(5,\C)$ to classification problem in $\GL(5,\C)$, and the latter one can be handled with the help of non-smoothness criteria (Proposition \ref{pp:nsmcubic}) and computer program GAP (see Appendix \ref{ap:GAP}). It turns out that for a smooth cubic threefold $X$, $\Aut(X)\subset \PGL(5,\C)$ always admits an $F$-lifting (Theorem \ref{thm:Flift}). For a candidate finite group $G$ of order $2^a3^b 5^c\le 2000$,  smoothness of $X$ gives strong constraints (see Table \ref{tab:ab}) on eigenvalues (``local information'') of each element in an $F$-lfitng $\tilde{G}$ (which is naturally viewed as a 5-dimensional faithful linear representation of $G$) . On the other hand, GAP provides character table and list of all subgroups (especially, abelian subgroups) of $G$ (``global information''). Then our way of ruling out groups is simply by combining local and global information (see Theorem \ref{thm:sylow2} and its proof). It turns out that this method of ruling out groups which cannot faithfully act on smooth cubic threefolds is quite efficient in our study. Larger orders cases (i.e., $|G|=2^a3^b 5^c>2000$) are reduced to smaller orders cases just mentioned in PC free way.

\medskip

\subsection*{Acknowledgements}
We would like to thank Professors Keiji Oguiso and Song Yang for helpful conversations. The second author is partially supported by the National Natural Science Foundation of China (Grant No. 11701413).

\section{Notation and conventions}

In this paper, if $A\in \GL(n,\mathbb{C})$, then we use $[A]$ denote the corresponding element in $\PGL(n, \mathbb{C})$.

   $X_i$ (resp. $G_{X_i}$), $i=1,...,6$, are the six smooth cubic threefolds (resp. finite groups) in Example \ref{mainex} in Section \ref{ss:6examples};

  $I_n:=$ the identity matrix of rank $n$;

  $\xi_k:=e^{\frac{2\pi i}{k}}$ a $k$-th primitive root of unity, where $k$ is a positive integer;

     We use $\pi: \GL(n,\mathbb{C})\longrightarrow \PGL(n,\mathbb{C})$ to denote the natural quotient map.

     Let $G$ be a finite group and $p$ be a prime. If no confusion causes, we use $G_p$ to denote a Sylow $p$-subgroup of $G$.

    The following is the list of symbols of finite groups used in this article:

     $C_n$: a cyclic group of order $n$,

     $D_{2n}$: a dihedral group of order $2n$,

     $S_n(A_n)$: a symmetric (alternative) group of degree $n$,

     $Q_8$: a quaternion group of order 8.

\section{Examples and main Theorem}\label{ss:6examples}

Throughout this paper, we identity $\PGL(n+1,\C)$ with $\Aut(\P^n)$ via the following group action: \begin{equation}\label{eq:Psi} \Psi:\PGL(n+1,\C)\times \P^n\longrightarrow \P^n,
\end{equation} where, for any $A=(a_{ij})\in \GL(n+1,\C)$ and any $(z_1:\cdots :z_{n+1})\in \P^n$, $$\Psi([A],(z_1:\cdots :z_{n+1}))=(\sum_{i=1}^{n+1}a_{1i}z_i:\cdots:\sum_{i=1}^{n+1}a_{(n+1)i}z_i).$$

For any $A=(a_{ij})\in \GL(n+1,\mathbb{C})$ and any homogeneous polynomial $F\in \C[x_1,...,x_{n+1}]$, we denote by $A(F)$ the homogeneous polynomial 

\begin{equation}\label{eq:A(F)} F(\sum_{i=1}^{n+1}a_{1i}x_i,\cdots,\sum_{i=1}^{n+1}a_{(n+1)i}x_i)\, .\end{equation}
For a finite subgroup $G<\PGL(n+1,\C)$ and a smooth hypersurface $X\subset \P^n$ defined by an irreducible homogeneous polynomial $F=F(x_1,...,x_{n+1})$ of degree greater than $1$, if, for any $[A]\in G$, $A(F)=\lambda_{A} F$ for some $\lambda_{A}\in \C^*$, then clearly $G$ acts on $X$ via $\Psi$ and $G$ is a subgroup of $\Aut(X)$.

\begin{example}\label{mainex}
(1) Fermat cubic threefold $X_1$: $F=x^3_1+x^3_2+x^3_3+x^3_4+x^3_5=0$. Let $G_{X_1}$ be the subgroup of $\PGL(5,\mathbb{C})$ generated by the following three matrices:

$A_{11}=
    \begin{pmatrix}
   0&1&0&0&0\\
   1&0&0&0&0\\
   0&0&1&0&0\\
   0&0&0&1&0\\
   0&0&0&0&1
    \end{pmatrix} A_{12}=
    \begin{pmatrix}
    0&1&0&0&0\\
    0&0&1&0&0\\
    0&0&0&1&0\\
    0&0&0&0&1\\
    1&0&0&0&0
    \end{pmatrix}$ $A_{13}=
    \begin{pmatrix}
    1&0&0&0&0\\
    0&\xi_3&0&0&0\\
    0&0&\xi_3^2&0&0\\
    0&0&0&1&0\\
    0&0&0&0&1
    \end{pmatrix}.$

Then $\Aut(X_1)=G_{X_1}\cong C_3^4\rtimes S_5$ and $|G_{X_1}|=2^3\cdot 3^5\cdot 5=9720$.
\\[.2cm]

(2) Let $X_2: F=x^3_1+x^3_2+x^3_3+3(\sqrt{3}-1)x_1x_2x_3+x^3_4+x^3_5=0$ (see \cite[Lemma 12.15]{DD18}) and let $G_{X_2}$ be the subgroup of $\PGL(5,\mathbb{C})$ generated by the following five matrices:

$A_{21}=
    \begin{pmatrix}
    0&1&0&0&0\\
    0&0&1&0&0\\
    1&0&0&0&0\\
    0&0&0&1&0\\
    0&0&0&0&1
    \end{pmatrix} A_{22}=
    \begin{pmatrix}
    1&0&0&0&0\\
    0&\xi_3&0&0&0\\
    0&0&\xi_3^2&0&0\\
    0&0&0&1&0\\
    0&0&0&0&1
    \end{pmatrix} A_{23}=\frac{1}{\sqrt{3}}
    \begin{pmatrix}
    1&1&1&0&0\\
    1&\xi_3&\xi_3^2&0&0\\
    1&\xi_3^2&\xi_3&0&0\\
    0&0&0&\sqrt{3}&0\\
    0&0&0&0&\sqrt{3}
    \end{pmatrix}$
    
    $A_{24}=
    \begin{pmatrix}
    1&0&0&0&0\\
    0&1&0&0&0\\
    0&0&1&0&0\\
    0&0&0&0&1\\
    0&0&0&1&0
    \end{pmatrix} A_{25}=
    \begin{pmatrix}
    1&0&0&0&0\\
    0&1&0&0&0\\
    0&0&1&0&0\\
    0&0&0&\xi_3&0\\
    0&0&0&0&\xi_3^2
    \end{pmatrix}.$

  Then $G_{X_2}$ acts on $X_2$, $G_{X_2}\cong ((C_3^2\rtimes C_3)\rtimes C_4)\times S_3$, and $|G_{X_2}|=2^3\cdot 3^4=648$.\\[.2cm]

 (3) Let $X_3: F=x^2_1x_2+x^2_2x_3+x^2_3x_4+x^3_4+x^3_5=0$, and let $G_{X_3}$ be the subgroup of $\PGL(5,\mathbb{C})$ generated by the following matrice:

 $A_{31}=
    \begin{pmatrix}
    \xi_8&0&0&0&0 \\
    0&\xi_8^{-2}&0&0&0 \\
    0&0&-1&0&0 \\
    0&0&0&1&0 \\
    0&0&0&0&\xi_3 \\
    \end{pmatrix}$

 Then $G_{X_3}$ acts on $X_3$, $G_{X_3}$ is isomorphic to $C_{24}$ and $|G_{X_3}|=2^3\cdot 3=24$. \\[.2cm]

 (4) Let $X_4: F=x^2_1x_2+x^2_2x_3+x^2_3x_4+x^2_4x_5+x^3_5=0$ and let $G_{X_4}$ be the subgroup of $\PGL(5,\mathbb{C})$ generated by the following matrice:

 $A_{41}=
    \begin{pmatrix}
    \xi_{16}&0&0&0&0\\
    0&\xi_{16}^{-2}&0&0&0\\
    0&0&\xi_{16}^{4}&0&0\\
    0&0&0&-1&0\\
    0&0&0&0&1
    \end{pmatrix} $

 Then $G_{X_4}$ acts on $X_4$, $G_{X_4}\cong C_{16}$ and $|G_{X_4}|=2^4=16$.\\[.2cm]

 (5) Klein cubic threefold $X_5:F=x^2_1x_2+x^2_2x_3+x^2_3x_4+x^2_4x_5+x^2_5x_1=0$ and let $G_{X_5}$ be the finite simple group $\PSL(2,11)$. Then $G_{X_5}\cong \Aut(X_5)$ (\cite{Ad78}) and $|G_{X_5}|=2^2\cdot 3\cdot 5\cdot 11=660$. Note that $\PSL(2,11)$ is not a subgroup of the Cremona group of rank 3 (\cite{Pr12}). \\[.2cm]

   (6) Let $X_6: \{x_1^3+x_2^3+x_3^3+x_4^3+x_5^3+x_6^3=x_1+x_2+x_3+x_4+x_5=0\}\subset \mathbb{P}^5$ and let $G_{X_6}$ be the subgroup of $\PGL(6,\C)$ generated by the following three matrices:

   $A_{61}=
    \begin{pmatrix}
    0&1&0&0&0&0 \\
    1&0&0&0&0&0 \\
    0&0&1&0&0&0 \\
    0&0&0&1&0&0 \\
    0&0&0&0&1&0 \\
    0&0&0&0&0&1 \\
    \end{pmatrix} A_{62}=
    \begin{pmatrix}
    0&1&0&0&0&0 \\
    0&0&1&0&0&0 \\
    0&0&0&1&0&0 \\
    0&0&0&0&1&0 \\
    1&0&0&0&0&0 \\
    0&0&0&0&0&1 \\
    \end{pmatrix} A_{63}=
    \begin{pmatrix}
   1&0&0&0&0&0 \\
    0&1&0&0&0&0 \\
    0&0&1&0&0&0 \\
    0&0&0&1&0&0 \\
    0&0&0&0&1&0 \\
    0&0&0&0&0&\xi_3 \\
    \end{pmatrix}$

   Then $G_{X_6}$ acts on $X_6$, $G_{X_6}\cong S_5\times C_3$ and $|G_{X_6}|=2^3\cdot 3^2\cdot 5=360$.   \\[.2cm]

  \end{example}

  Our main theorem is the following:
  \begin{theorem}\label{thm:Main}

  For a finite group $G$, the following two conditions are equivalent to each other:

  (i) $G$ is isomorphic to a subgroup of one of the $6$ groups above, and

  (ii) $G$ has a faithful action on a smooth cubic threefold.
\end{theorem}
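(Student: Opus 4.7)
The direction (i) $\Rightarrow$ (ii) reduces to checking each of the six examples in Example \ref{mainex}. For each $i$, one verifies directly that every generator $A_{ij}$ of $G_{X_i}$ satisfies $A_{ij}(F_i) = \lambda F_i$ for some scalar $\lambda$, so $G_{X_i}$ preserves $X_i$ via $\Psi$. The faithfulness of the $\PGL$-action on $X_i$ is automatic because $X_i$ is a smooth hypersurface of degree $3$, so by Matsumura--Monsky the map $\Aut(X_i) \to \PGL(5,\C)$ (or $\PGL(6,\C)$ for $X_6$) is injective. The abstract group structure and order of each $G_{X_i}$ is then read off from the explicit matrices (by hand or by a short GAP check).

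For the substantive direction (ii) $\Rightarrow$ (i), let $G$ act faithfully on a smooth cubic threefold $X \subset \P^4$ with defining polynomial $F$. By Matsumura--Monsky, $G$ embeds in $\PGL(5,\C)$ and $F$ is $G$-semi-invariant. By \cite{GL11} the prime divisors of $|G|$ belong to $\{2,3,5,11\}$. If $11 \mid |G|$, then $X$ carries an automorphism of order $11$, and by \cite{Ro09} such a cubic threefold is projectively equivalent to the Klein cubic $X_5$, so $G \subseteq \Aut(X_5) = G_{X_5} = \PSL(2,11)$. Hence we may assume $|G| = 2^a 3^b 5^c$.

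The heart of the argument then runs as follows. Apply Theorem \ref{thm:Flift} to produce an $F$-lifting $\widetilde{G} \subset \GL(5,\C)$, viewed as a faithful $5$-dimensional linear representation of $G$. Proposition \ref{pp:nsmcubic} and the semi-invariance of $F$ impose strong constraints on the eigenvalue multisets of every element of $\widetilde{G}$, summarized in Table \ref{tab:ab}. The strategy is a two-layer census combining \emph{local} and \emph{global} data. For each candidate order $N = 2^a 3^b 5^c \le 2000$, GAP supplies the complete list of isomorphism classes of groups of order $N$; for each such $G$, the character table identifies the faithful $5$-dimensional representations (if any), and the subgroup lattice, in particular the maximal abelian subgroups, is used to check that the eigenvalues of every element can be simultaneously realized in a single such representation in a way compatible with Table \ref{tab:ab}. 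This local-global matching eliminates every group not isomorphic to a subgroup of $G_{X_1}, \ldots, G_{X_6}$; when compatibility holds, one furthermore exhibits the embedding into one of the six maximal groups directly from the representation.

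Finally, orders $|G| = 2^a 3^b 5^c > 2000$ are handled without further GAP use by a Sylow and centralizer reduction: any such $G$ contains a subgroup $H$ of order in the previously treated range (for instance a suitable Sylow subgroup or the centralizer in $\PGL(5,\C)$ of an element of large order permitted by Table \ref{tab:ab}), and the classification in the range $\le 2000$, together with the rigidity of the ambient $\PGL(5,\C)$-action on a smooth cubic, forces $G$ itself to embed in one of the six groups. The main obstacle is the bookkeeping in the first layer: producing, for each small-order candidate, a complete list of faithful $5$-dimensional representations and ruling each one out (or confirming it) against Table \ref{tab:ab} element-by-element; once this small-order database is correct, the passage to $|G|>2000$ and the identification of the six maximal groups are comparatively routine.
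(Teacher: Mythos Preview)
Your outline for (i)$\Rightarrow$(ii) and for the case $11\mid |G|$ is correct and matches the paper. For $|G|=2^a3^b5^c\le 2000$ your description (F-lifting via Theorem~\ref{thm:Flift}, then confronting the character table of each candidate $G$ with the eigenvalue constraints of Table~\ref{tab:ab}) is exactly the paper's method, though the paper organizes this by first pinning down the Sylow subgroups (Theorems~\ref{thm:sylow2}, \ref{thm:sylow3}, \ref{thm:sylow5}) and then splitting the remaining census into solvable and non-solvable cases (Theorems~\ref{thm:sol2000}, \ref{thm:nsol2000}).

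The genuine gap is your treatment of $|G|>2000$. Knowing that every subgroup of $G$ of order $\le 2000$ embeds in one of the six groups does \emph{not} force $G$ itself to embed; and the ``centralizer in $\PGL(5,\C)$'' argument you sketch is not used in the paper and it is not clear how it would close this gap. The paper's mechanism is different and rests on two specific ingredients you omit. First, a rigidity input from Table~\ref{tab:ab}: if $C_3^4\subset G$ (equivalently $3^5\mid |G|$, by Theorem~\ref{thm:sylow3}), then $X\cong X_1$, so automatically $G\subset\Aut(X_1)=G_{X_1}$ (Proposition~\ref{pp:3^5}). This single observation disposes of the orders $2430,3888,4860,9720,19440$. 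Second, for the remaining large orders one does not embed $G$ but rather \emph{excludes} it: in the solvable case ($|G|\in\{2160,3240,6480\}$) Hall's theorem (Theorem~\ref{thm:solsub}) produces a subgroup of order $2^3\cdot 5$ or $2^4\cdot 5$, which the $\le 2000$ census already shows cannot occur; in the non-solvable case only $|G|=3240$ survives (after Burnside's $p^aq^b$ theorem and the fact that cyclic Sylow $2$-subgroups force solvability), and one analyzes the simple quotient $G/N$ using the classification of simple groups of the relevant orders to reach a contradiction (Theorem~\ref{thm:nsol>2000}). Without these two ingredients---the $C_3^4$ rigidity and the solvable/non-solvable dichotomy with Hall subgroups and simple quotients---your reduction from $|G|>2000$ to $|G|\le 2000$ does not go through.
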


We will prove Theorem \ref{thm:Main} in Section \ref{ss:proofmainthm}.

 \section{Smoothness and liftability}\label{ss:smlift}
 
 In this section, we recall some definitions and results from \cite{OY19}, and we will prove that any subgroup of the automorphism group of a smooth cubic threefold has an $F$-lifting (Theorem \ref{thm:Flift}).

\begin{definition}
   Let $F=F(x_1,...,x_{n+1})$ be a homogeneous polynomial of degree $d> 0$ and let $m=m(x_1,...,x_{n+1})$ be a monomial of degree $d$. Then we say $m$ is in $F$ (or $m\in F$) if the coefficient of $m$ is not zero in the expression of $F$. 
\end{definition}

We will frequently use Lemm \ref{lem:nsm} and Proposition \ref{pp:nsmcubic} to check smoothness of hypersurfaces in the sequel.

\begin{lemma}[{\cite[Lemma 3.2 and Proposition 3.3]{OY19}}]\label{lem:nsm}
Let $F=F(x_1,...,x_{n+1})$ be an irreducible homogeneous polynomial of degree $d\geq 3$ and let $M:=\{F=0\}\subseteq \mathbb{P}^n$. Let $a$ and $b$ be two nonnegative integers, and $2a+b\leq n$. The hypersurface $M$ is not smooth if there exist $a+b$ distinct variables $x_{i_1},\ldots,x_{i_{a+b}}$ such that $F\in ( x_{i_1},\ldots,x_{i_a}) +( x_{i_{a+1}},\ldots,x_{i_{a+b}}) ^2$, where $( x_{k_1},\cdots,x_{k_m}) $ means the ideal of $\mathbb{C}[x_1,...,x_{n+1}]$ generated by $x_{k_1},\cdots,x_{k_m}$. In particular, if $M$ is smooth, then, for any $i\in \{1,2,..., n+1\}$,  $x_i^{d-1}x_j\in F$ for some $j=j(i)$.

\end{lemma}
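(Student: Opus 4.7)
The plan is to exhibit a singular point of $M$ explicitly, obtained by intersecting a coordinate linear subspace with the common zero locus of the ``linear coefficients'' that witness the ideal membership.

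First I would write the hypothesis as
\[
F \;=\; \sum_{k=1}^{a} x_{i_k}\, G_k \;+\; H,
\]
where each $G_k$ is homogeneous of degree $d-1$ and $H$ lies in $(x_{i_{a+1}},\dots,x_{i_{a+b}})^2$. Consider the coordinate linear subspace
\[
L \;:=\; \{x_{i_1} = \cdots = x_{i_{a+b}} = 0\} \;\subset\; \mathbb{P}^n,
\]
which has dimension $n-a-b$. Since every term of $F$ is divisible by one of $x_{i_1},\dots,x_{i_a}$ or by a product of two variables from $\{x_{i_{a+1}},\dots,x_{i_{a+b}}\}$, one has $F|_L \equiv 0$, so $L \subset M$.

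Next I would compute $\partial F/\partial x_k$ at a point $P \in L$. The $H$-contribution vanishes at $P$ because after differentiation every surviving monomial still carries at least one factor from $\{x_{i_{a+1}},\dots,x_{i_{a+b}}\}$, which is zero on $L$. Similarly, if $k \notin \{i_1,\dots,i_a\}$, the contribution from $\sum x_{i_k} G_k$ vanishes at $P$. The only possibly nonzero partial derivatives at $P$ are $\partial F/\partial x_{i_j} \big|_P = G_j(P)$ for $1\le j\le a$. Thus $P$ is a singular point of $M$ if and only if $G_1(P) = \cdots = G_a(P) = 0$. Restricting to $L \cong \mathbb{P}^{n-a-b}$ and applying the projective dimension theorem to the $a$ hypersurfaces cut out by $G_1|_L,\dots,G_a|_L$, the common zero locus has dimension at least $(n-a-b)-a = n-2a-b \geq 0$ by hypothesis, hence is nonempty. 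Any such common zero is a singular point of $M$ lying on $L$.

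For the ``in particular'' statement, I would argue by contrapositive directly rather than trying to squeeze it out of the general criterion. Fix $i$ and suppose that no monomial $x_i^{d-1} x_j$ (including $j=i$) appears in $F$. Let $P_i \in \mathbb{P}^n$ be the coordinate point with $i$-th coordinate $1$ and the rest $0$. Absence of $x_i^d$ gives $F(P_i)=0$; and a direct computation shows that $\partial F/\partial x_j$ evaluated at $P_i$ is, up to the factor $d$ (or an appropriate multiplicity), the coefficient of $x_i^{d-1} x_j$ in $F$. By hypothesis all these coefficients vanish, so $P_i$ is a singular point of $M$, contradicting smoothness.

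The only mildly delicate point is the dimension count in the main step, in particular the edge cases $a = 0$ (where the $G_j$-conditions are vacuous and one uses $L\subset M$ directly) and the case where some $G_k|_L$ is identically zero (which only makes the common zero locus larger). Apart from this, the argument is a direct Jacobian computation.
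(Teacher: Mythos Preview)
The paper does not supply its own proof of this lemma; it is quoted from \cite[Lemma~3.2 and Proposition~3.3]{OY19}. Your argument is correct and is essentially the standard one: the decomposition $F=\sum_{k} x_{i_k}G_k+H$, the inclusion $L\subset M$, the Jacobian computation showing that on $L$ the only possibly nonvanishing partials are $\partial F/\partial x_{i_j}\big|_P=G_j(P)$, and the projective dimension theorem giving a common zero of $G_1|_L,\dots,G_a|_L$ since $(n-a-b)-a\ge 0$. Your treatment of the edge cases ($a=0$, or some $G_k|_L\equiv 0$) is also correct.

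One minor remark: the ``in particular'' clause is literally the special case $a=0$, $b=n$ of the main criterion, with the squared ideal generated by all variables except $x_i$. Indeed, ``no monomial $x_i^{d-1}x_j$ appears in $F$'' is equivalent to $F\in (x_1,\dots,\widehat{x_i},\dots,x_{n+1})^2$, and then $2a+b=n\le n$. Your direct verification via the coordinate point $P_i$ is of course also valid and perhaps more transparent.
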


 \begin{proposition}[{\cite[Proposition 3.4]{OY19}}]\label{pp:nsmcubic}
Let $M$ be a hypersurface in $\mathbb{P}^4$ defined by an irreducible homogeneous polynomial $F=F(x_1,...,x_5)$ of degree 3. Then $M$ is not smooth if one of the following three conditions is true:

(1) There exists $ 1\leq i\leq 5$, such that for all $ 1\leq j\leq 5,  x_i^2x_j\notin F$;

(2) There exists a pair $ (p, q), p\neq q,$  such that  $F\in ( x_p,x_q) $;

(3) There exist three distinct variables $x_i,x_j,x_k$, such that $F\in ( x_i) +( x_j,x_k) ^2$.
\end{proposition}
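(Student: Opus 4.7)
The plan is to derive each of the three conditions as a direct specialization of Lemma \ref{lem:nsm} with $n=4$ and $d=3$; the only ``work'' is to match integers $a,b \geq 0$ satisfying $2a+b \leq 4$ to the shape of each hypothesis, after which nonemptiness of the resulting singular locus falls out from Bezout or the fundamental theorem of algebra.

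Condition (1) is precisely the contrapositive of the ``in particular'' clause of Lemma \ref{lem:nsm}: smoothness of $M$ forces, for every index $i$, some monomial $x_i^{d-1}x_j = x_i^2 x_j$ to appear in $F$, so failure of this for some $i$ means $M$ is singular. Geometrically, if no $x_i^2 x_j$ appears in $F$, a one-line derivative computation shows that the coordinate point $e_i = (0,\ldots,0,1,0,\ldots,0)$ lies on $M$ and that all five partial derivatives $\partial F/\partial x_j$ vanish there, so $e_i$ is a singular point of $M$.

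Condition (2) is Lemma \ref{lem:nsm} with $(a,b) = (2,0)$, since $2a+b = 4 = n$. Writing $F = x_p G + x_q H$ with $G,H$ homogeneous of degree $2$, the plane $L = \{x_p = x_q = 0\} \cong \P^2$ is contained in $M$; on $L$ the only potentially nonvanishing partials of $F$ are $\partial F/\partial x_p|_L = G|_L$ and $\partial F/\partial x_q|_L = H|_L$, so the singular locus of $M$ meets $L$ in $\{G|_L = H|_L = 0\}$, the intersection of two plane conics (or worse), which is nonempty by Bezout.

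Condition (3) is Lemma \ref{lem:nsm} with $(a,b) = (1,2)$, again with $2a+b = 4 = n$. Writing $F = x_i G + Q$ with $G$ of degree $2$ and $Q \in (x_j,x_k)^2$, the line $\Lambda = \{x_i = x_j = x_k = 0\} \cong \P^1$ lies in $M$, and on $\Lambda$ the only partial not identically zero is $\partial F/\partial x_i|_\Lambda = G|_\Lambda$, a polynomial of degree $2$ on $\P^1$ which must vanish somewhere. Thus once the parameter choices are identified there is no real obstacle; the main conceptual point is simply that a cubic hypersurface in $\P^4$ has just enough codimension for each of these linear-algebraic degenerations of $F$ to force a genuine singularity.
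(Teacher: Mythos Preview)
Your derivation is correct. The paper does not supply its own proof of this proposition; it is simply quoted from \cite[Proposition 3.4]{OY19}, as is Lemma~\ref{lem:nsm} (which combines \cite[Lemma 3.2 and Proposition 3.3]{OY19}). Your approach---recognizing (1) as the contrapositive of the ``in particular'' clause of Lemma~\ref{lem:nsm}, and (2), (3) as the specializations $(a,b)=(2,0)$ and $(a,b)=(1,2)$ of that lemma with $n=4$, $d=3$---is exactly the intended reduction and mirrors how Proposition~3.4 is obtained from Lemma~3.2/Proposition~3.3 in \cite{OY19}. The auxiliary geometric arguments you give (coordinate point $e_i$, two conics in the plane $L$, a quadratic on the line $\Lambda$) are correct and make the proof self-contained, but they are not needed once Lemma~\ref{lem:nsm} is accepted.
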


  Let $F\in \C[x_1,...,x_{n+1}]$ be a homogeneous polynomial of degree $d$. For any $A=(a_{ij})\in \GL(n+1,\mathbb{C})$, we denote by $A(F)$ the homogeneous polynomial $$F(\sum_{i=1}^{n+1}a_{1i}x_i,\cdots,\sum_{i=1}^{n+1}a_{(n+1)i}x_i)\, .$$ Note that $(AB)(F)=A(B(F))$ for any $A,B\in \GL(n+1,\C). $ Following \cite{OY19}, we recall some definitions about liftability of group actions.
  
   \begin{definition}\label{def:leaveinvariant}
(1)  Let $A=(a_{ij})\in \GL(n+1,\mathbb{C})$. We say $F$ is $A$-{\it invariant} if $A(F)=F$. In this case, we also say $A$ leaves $F$ invariant, or $F$ is invariant by $A$. We say $F$ is $A$-{\it semi-invariant} if $A(F)=\lambda F,$ for some $\lambda\in \mathbb{C}^*$.

  (2) Let $G$ be a finite subgroup of $\PGL(n+1,\mathbb{C})$. We say $F$ is $G$-{\it invariant} if for all $g\in G$, there exists $ A_g\in \GL(n+1,\mathbb{C})$ such that $g=[A_g]$ and $A_g(F)=F$, equivalently $F$ is $G$-invariant if $F$ is $A$-semi-invariant for any $A\in \GL(n+1,\C)$ such that $[A]\in \PGL(n+1,\C)$.      
 
 (3) Let $G$ be a finite subgroup of $\PGL(n+1,\mathbb{C})$. We say a subgroup $\widetilde{G}< \GL(n+1,\mathbb{C})$ is a {\it lifting} of $G$ if $\widetilde{G}$ and $G$ are isomorphic via the natural projection $\pi: \GL(n+1,\C)\rightarrow \PGL(n+1,\C).$ We call $G$ {\it liftable} if $G$ admits a lifting.
 \end{definition}

  \begin{definition}\label{def:Flift}
 (1) Let $G$ be a finite subgroup of $\PGL(n+1,\mathbb{C})$. We say $G$ is $F$-{\it liftable} if the following two conditions are satisfied:
  
  1) $G$ admits a lifting $\widetilde{G}< \GL(n+1,\mathbb{C})$; and
  
  2)  $A(F)=F$, for all $A$ in $\widetilde{G}$.

  In this case, we say $\widetilde{G}$ is an $F$-{\it lifting} of $G$.
  
  We say $G$ is $F$-{\it semi-liftable} if 2) is replaced by the following:
  
  2$)^\prime$ for all $A$ in $\widetilde{G}$, $A(F)=\lambda_A F$, for some $\lambda_A\in \mathbb{C}^*$ (depending on $A$).

(2)  Let $h$ be an element in $\PGL(n+1,\mathbb{C})$ of finite order. As a special case, we say $H\in \GL(n+1,\mathbb{C})$ is an $F$-{\it lifting} (resp. a lifting) of $h$ if $\pi (H)=h$ and the subgroup $\langle H\rangle< \GL(n+1,\C) $ is an $F$-lifting (resp. a lifting) of the subgroup $\langle h\rangle<\PGL(n+1,\C) .$ 
  \end{definition}

 \begin{theorem}[{\cite[Theorem 4.8]{OY19}}]\label{liftable}
  Let $G$ be a finite subgroup of $\PGL(n+1,\mathbb{C})$. Let $F\in \C[x_1,...,x_{n+1}]$ be a nonzero homogeneous polynomial of degree $p$, where $p$ is a prime number. Suppose $F$ is $G$-invariant. Let $G_p$ be a Sylow $p$-subgroup. Then $G$ is $F$-liftable if the following two conditions are satisfied:
  
    (1) $G_p$ is $F$-liftable; and

    (2) either $G_p$ has no element of order $p^2$ or $G$ has no normal subgroup of index $p.$  
  \end{theorem}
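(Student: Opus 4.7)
The plan is to recast $F$-liftability as the splitting of a central extension of $G$ by $\mu_p$, and then to reduce to the Sylow $p$-subgroup via a standard cohomological transfer argument.

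Since $F$ is $G$-invariant, there is a well-defined character $\chi\colon \pi^{-1}(G)\to \C^*$ given by $A(F)=\chi(A)F$. Setting
$$\widetilde{G} := \ker\chi = \{A\in \GL(n+1,\C)\mid [A]\in G,\ A(F)=F\},$$
the fact that every element of $\C^*$ is a $p$-th power makes $\pi\colon \widetilde{G}\to G$ surjective, with kernel $\{\zeta I : \zeta^p=1\}\cong \mu_p$. This produces a central extension $1\to \mu_p\to \widetilde{G}\to G\to 1$, and $G$ is $F$-liftable precisely when this extension splits, i.e.\ when its class $\alpha\in H^2(G,\mu_p)$ vanishes. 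Hypothesis (1) supplies an $F$-lifting $\widetilde{G}_p\subset \widetilde{G}$ of $G_p$, which is a section of the restricted extension and therefore witnesses $\Res^G_{G_p}(\alpha)=0$.

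Next I would invoke the standard transfer identity $\mathrm{Cor}^G_{G_p}\circ \Res^G_{G_p}=[G:G_p]\cdot \id$. Since $[G:G_p]$ is coprime to $p$ and $H^2(G,\mu_p)$ is annihilated by $p$, this composition is an automorphism, so $\Res^G_{G_p}$ is injective on $H^2(G,\mu_p)$. Combined with $\Res^G_{G_p}(\alpha)=0$, this forces $\alpha=0$, and $G$ therefore admits an $F$-lifting.

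The main subtlety is that condition (2) is not visibly needed in this cohomological approach; I expect the authors' original proof to be more elementary and constructive, invoking condition (2) precisely so as to sidestep cohomology. A direct approach would begin with the fixed $F$-lifting $\widetilde{G}_p$ and attempt to extend it to a section on all of $G$ by choosing lifts on a transversal for $G_p$ in $G$. Via the universal coefficient theorem the obstruction $\alpha$ decomposes into a piece in $\Hom(H_2(G,\Z),\mu_p)$ and a piece in $\mathrm{Ext}(G^{\mathrm{ab}},\mu_p)$; the latter vanishes exactly when $G$ has no normal subgroup of index $p$, while the exponent hypothesis on $G_p$ guarantees that $p$-elements of $G$ admit lifts of order $p$ and thereby helps control the former. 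Making this cocycle-level construction on a transversal close up into an honest group homomorphism is the step I expect to be the main obstacle.
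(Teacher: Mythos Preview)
The paper does not contain a proof of this theorem: it is quoted from \cite[Theorem~4.8]{OY19} and used as a black box, so there is no in-paper argument to compare against.

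Your cohomological argument is nonetheless correct and complete. The construction of the central extension $1\to\mu_p\to\widetilde{G}\to G\to 1$ is sound (surjectivity of $\pi$ follows from divisibility of $\C^*$, the kernel computation is immediate from $\deg F=p$, and $|\widetilde{G}|=p\,|G|$ is finite), and the identification of $F$-liftability with the splitting of this extension is exactly right. The transfer step then finishes the proof: $H^2(G,\mu_p)$ is a $\Z/p\Z$-vector space, $[G:G_p]$ is a unit modulo $p$, and so corestriction composed with restriction is an automorphism, forcing restriction to be injective and hence $\alpha=0$.

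You are also right that condition (2) plays no role in this argument; your proof in fact establishes the stronger statement with hypothesis (1) alone. Your guess that the original argument in \cite{OY19} is more hands-on and invokes (2) to avoid cohomology is plausible, but since that proof is not reproduced here there is nothing concrete to compare. The final paragraph of your proposal, sketching such a constructive route, is speculative and unnecessary for correctness; it can safely be dropped.
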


In the rest of this section, let $X\subset \P^4$ be a smooth cubic threefold defined by an irreducible homogeneous polynomial $F=F(x_1,...,x_5)$. Since $\Aut(X)=\{f\in \Aut(\P^4) | f(X)=X\}$ is a finite group, we may and will view $\Aut(X)$ as a subgroup of $\PGL(5,\C)$ via $\Psi$ (see (\ref{eq:Psi}) in Section \ref{ss:6examples}). 

\begin{proposition}\label{pp:not3lift}
Let $G\subset \Aut(X)$ be a subgroup. Suppose $3\nmid |G|$. Then $G$ has a unique $F$-lifting.
\end{proposition}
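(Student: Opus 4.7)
The plan is to split the proposition into two parts: existence and uniqueness of the $F$-lifting, and for existence I would simply invoke the authors' own Theorem \ref{liftable}.

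For existence, I would apply Theorem \ref{liftable} with prime $p=3$. Since $G\subset \Aut(X)$ and $X=\{F=0\}$, $F$ is $G$-invariant in the sense of Definition \ref{def:leaveinvariant}(2). The hypothesis $3\nmid |G|$ forces the Sylow $3$-subgroup $G_3$ to be trivial, so condition (1) of Theorem \ref{liftable} is vacuously satisfied (the trivial subgroup $\{I_5\}$ is an $F$-lifting of $\{e\}$), and condition (2) is vacuous as well. Theorem \ref{liftable} therefore produces an $F$-lifting $\widetilde{G}<\GL(5,\C)$.

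For uniqueness, suppose $\widetilde{G}_1$ and $\widetilde{G}_2$ are two $F$-liftings of $G$. For each $g\in G$, let $A_i(g)\in \widetilde{G}_i$ be the unique element with $\pi(A_i(g))=g$; the maps $g\mapsto A_i(g)$ are group isomorphisms $G\xrightarrow{\sim}\widetilde{G}_i$. Since $\pi(A_1(g)A_2(g)^{-1})=e$ and $\ker\pi$ consists of nonzero scalar matrices, there is $\phi(g)\in\C^*$ with $A_1(g)=\phi(g)\, A_2(g)$. Applying both sides to $F$, using $A_i(g)(F)=F$ and $\deg F=3$, gives $F=\phi(g)^3 F$, so $\phi(g)^3=1$ and $\phi(g)\in\{1,\xi_3,\xi_3^2\}$. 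A short computation, $A_1(gh)=A_1(g)A_1(h)=\phi(g)\phi(h)\, A_2(g)A_2(h)=\phi(g)\phi(h)\, A_2(gh)$, shows that $\phi:G\to\{1,\xi_3,\xi_3^2\}$ is a group homomorphism. Because $\gcd(3,|G|)=1$, the only such homomorphism is trivial, so $\phi\equiv 1$ and $\widetilde{G}_1=\widetilde{G}_2$.

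There is no real obstacle here: existence is a direct citation, and the uniqueness argument is the standard ``two splittings of a central extension by $\mu_3$ differ by a homomorphism to $\mu_3$'' observation, which is killed by the coprimality assumption. The only point to get right is that the discrepancy scalar actually lands in the cube roots of unity, which uses precisely $\deg F=3$.
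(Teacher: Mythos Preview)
Your proof is correct. Existence is handled identically to the paper, by invoking Theorem~\ref{liftable} with the trivial Sylow $3$-subgroup. For uniqueness, however, your route differs from the paper's: the paper argues element-by-element, observing that if $A,B$ are two $F$-liftings of a single $g\in G$ then $A=\lambda B$ with $\lambda^3=1$ (from $\deg F=3$) and $\lambda^{\Ord(g)}=1$ (from $\Ord(A)=\Ord(B)=\Ord(g)$), hence $\lambda=1$ since $\Gcd(3,\Ord(g))=1$. You instead package the discrepancies $\phi(g)$ into a homomorphism $G\to\mu_3$ and kill it with $\Gcd(3,|G|)=1$. Your argument is the ``two sections of a central extension differ by a crossed homomorphism'' viewpoint, while the paper's is a direct order computation; both are equally short, and yours has the mild advantage of not needing to invoke that the preimages in an $F$-lifting have the correct order.
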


\begin{proof}
Let $k$ be $|G|$.  Since $3\nmid k$, by Theorem \ref{liftable}, $G$ admits an $F$-lifting $\tilde{G}\subset \GL(5,\mathbb{C})$. Let $g\in G$.
Suppose $A,B\in \GL(5,\C)$ are two $F$-liftings of $g$. Then $A=\lambda B$. By $A(F)=B(F)=F$,we have
$\lambda^3=1$. By $\lambda I_5=A{B}^{-1}$ and  ${\rm ord}(A)={\rm ord}(B)={\rm ord}(g)$, we have $\lambda^{{\rm ord}(g)}=1$. Then $\lambda^k=1$. Since $3$ and $k$ are coprime, it follows that $\lambda=1$. Thus, $G$ has a unique $F$-lifting $\tilde{G}$. \end{proof}

\begin{lemma}\label{lem:3lift}
Let $g\in \Aut(X)$ of order $3$. If $A$ is a lifting of $g$, then $A$ is an $F$-lifting of $g$. In particular, $g$ admits an $F$-lifting.
\end{lemma}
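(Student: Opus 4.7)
The plan is to show that for any lifting $A$ of $g$ (so $A^3 = I_5$), the scalar $\lambda \in \C^*$ defined by $A(F) = \lambda F$ (which exists because $F$ is $g$-semi-invariant) equals $1$. Applying $A$ three times gives $\lambda^3 = 1$. Since $A^2$ is likewise an order-$3$ lifting, of $g^2$, with $A^2(F) = \lambda^2 F$, it suffices to rule out the single case $\lambda = \xi_3$.

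Assume for contradiction that $A(F) = \xi_3 F$. Since $A^3 = I_5$, after a linear change of coordinates on $\P^4$ we may diagonalize $A = \operatorname{diag}(\xi_3^{w_1}, \ldots, \xi_3^{w_5})$ with $w_i \in \{0, 1, 2\}$. The semi-invariance condition then forces every monomial $\prod x_i^{b_i}$ with nonzero coefficient in the transformed $F$ to satisfy $\sum_i w_i b_i \equiv 1 \pmod 3$. Writing $d_k = \#\{i : w_i = k\}$, the smoothness input from Lemma \ref{lem:nsm} — for each $i$ some $j$ with $x_i^2 x_j \in F$, hence $2w_i + w_j \equiv 1 \pmod 3$ — forces each of $d_0, d_1, d_2$ to be positive. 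Combined with $d_0 + d_1 + d_2 = 5$, the triple $(d_0, d_1, d_2)$ is a permutation of either $(3, 1, 1)$ or $(2, 2, 1)$, leaving six sub-cases.

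The main step is to ruin smoothness in each sub-case by exhibiting an ideal of the form $(x_p, x_q)$ or $(x_i) + (x_j, x_k)^2$ containing every degree-$3$ monomial whose weight lies in $1 + 3\Z$, and hence containing $F$. For a distribution of type $(3, 1, 1)$, with three variables of common weight $\alpha$ and singletons $x_p, x_q$ of the other two weights, every degree-$3$ monomial avoiding $(x_p, x_q)$ has weight $3\alpha \equiv 0 \pmod 3$, so $F \in (x_p, x_q)$, contradicting Proposition \ref{pp:nsmcubic}(2). For a distribution of type $(2, 2, 1)$, pick $x_i$ to be the singleton and $(x_j, x_k)$ to be an appropriate one of the two pairs; a direct computation shows that every degree-$3$ monomial with $a_i = 0$ and $a_j + a_k \leq 1$ has weight $\equiv 0$ or $2 \pmod 3$, forcing $F \in (x_i) + (x_j, x_k)^2$ and contradicting Proposition \ref{pp:nsmcubic}(3). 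The three choices needed are: for $(d_0, d_1, d_2) = (2, 2, 1)$ take the weight-$0$ pair; for $(2, 1, 2)$ the weight-$2$ pair; for $(1, 2, 2)$ the weight-$1$ pair (and analogously under relabeling).

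The main obstacle is finding the correct ideal for each of the $(2, 2, 1)$ sub-cases — the pair-wise criterion (2) is no longer strong enough, and one must use (3) with a careful match of weights — but once the right pair is picked, the verification reduces to listing the possible $(a_1, \ldots, a_5)$ and computing $\sum w_i a_i \pmod 3$. The concluding ``In particular'' sentence is then immediate: any representative $B \in \GL(5, \C)$ of $g$ satisfies $B^3 = cI_5$ for some $c \in \C^*$, so choosing $c'$ with $(c')^3 = c^{-1}$ makes $c'B$ a lifting of order $3$, which by the main claim is automatically an $F$-lifting.
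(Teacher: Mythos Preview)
Your proof is correct and follows essentially the same approach as the paper: diagonalize $A$, reduce via $\lambda^3=1$ and passage to powers, then use the smoothness criteria of Proposition~\ref{pp:nsmcubic} to force $F$ into an ideal of the forbidden shape. The only organizational difference is that the paper further normalizes $A$ by scalar multiplication $A\mapsto \xi_3^i A$ to land on four explicit diagonal matrices and then checks each directly, whereas you leave $A$ unnormalized and instead use the $x_i^2x_j$ criterion (Lemma~\ref{lem:nsm}) to show every eigenvalue class is nonempty, cutting straight to the $(3,1,1)$ and $(2,2,1)$ distributions --- the paper's cases (b) and (d), where the arguments then coincide.
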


\begin{proof}
Note that any finite order element of $\PGL(5,\C)$ admits a lifting. Let $A$ be a lifting of $g$. Then $g=[A]$, ${\rm ord}(A)={\rm ord}(g)=3$, and $A(F)=\lambda F$ for some $\lambda\in \C^*$. Our goal is to show $\lambda=1$. Since $F=A^3(F)=\lambda^3F$, it follows that $\lambda^3=1$. For any $i\in \{0,1,2\}$ and any $j\in \{1,2\}$, $\xi_3^iA^j$ is a lifting of $g^j$, and $(\xi_3^iA^j)(F)=(\xi_3^i)^3\lambda^j F=\lambda^j F$. Thus, $\xi_3^iA^j$ is an $F$-lifting of $g^j$ if and only if $A$ is an $F$-lifting of $g$. Then, by linear change of coordinates and replacing the pair $(A, g)$ by $(\xi_3^iA^j, g^j)$ for suitable $i\in \{0,1,2\}$ and $j\in \{1,2\}$, we may assume $A$ is one of the following four cases: (a) $\Diag(1,1,1,1,\xi_3)$, (b) $\Diag(1,1,1,\xi_3,\xi_3^2)$, (c) $\Diag(1,1,1,\xi_3,\xi_3)$, (d) $\Diag(1,1,\xi_3,\xi_3,\xi_3^2)$. 

Case (a): $A=\Diag(1,1,1,1,\xi_3)$. Since $F$ is irreducible, it follows that $F\notin (x_5)\subset \C[x_1,...,x_5]$ and there exists a monomial $x_1^{d_1}x_2^{d_2}x_3^{d_3}x_4^{d_4}\in F$, where all $d_i\ge 0$ and $d_1+d_2+d_3+d_4=3$. Then, by $A(F)=\lambda F$ and $A(x_1^{d_1}x_2^{d_2}x_3^{d_3}x_4^{d_4})=x_1^{d_1}x_2^{d_2}x_3^{d_3}x_4^{d_4}$, we have $\lambda=1$.

Case (b): $A=\Diag(1,1,1,\xi_3,\xi_3^2)$. Since $X$ is smooth, by Proposition \ref{pp:nsmcubic} (2), $F\notin (x_4, x_5)\subset \C[x_1,...,x_5]$ and there exists a monomial $x_1^{d_1}x_2^{d_2}x_3^{d_3}\in F$, where all $d_i\ge 0$ and $d_1+d_2+d_3=3$. Then, by $A(F)=\lambda F$ and $A(x_1^{d_1}x_2^{d_2}x_3^{d_3})=x_1^{d_1}x_2^{d_2}x_3^{d_3}$, we have $\lambda=1$.

Case (c): $A=\Diag(1,1,1,\xi_3,\xi_3)$. Similar to case (b), we have $\lambda=1$.

Case (d): $A=\Diag(1,1,\xi_3,\xi_3,\xi_3^2)$. Suppose $\lambda=\xi_3$. Let $x_1^{d_1}x_2^{d_2}x_3^{d_3}x_4^{d_4}x_5^{d_5}\in F$ where all $d_i\ge 0$ and $d_1+...+d_5=3$. By $A(F)=\lambda F=\xi_3 F$ and $A(x_1^{d_1}x_2^{d_2}x_3^{d_3}x_4^{d_4}x_5^{d_5})=\xi_3^{d_3+d_4+2d_5}x_1^{d_1}x_2^{d_2}x_3^{d_3}x_4^{d_4}x_5^{d_5}$, we have $\xi_3=\xi_3^{d_3+d_4+2d_5}$. Thus, either $d_5>0$, or $d_5=0$ and $d_1+d_2=2$. Then  $x_1^{d_1}x_2^{d_2}x_3^{d_3}x_4^{d_4}x_5^{d_5}\in (x_5)+(x_1,x_2)^2\subset \C[x_1,...,x_5]$. Thus, $F\in (x_5)+(x_1,x_2)^2$, by Proposition \ref{pp:nsmcubic} (3), a contradition to smoothness of $X$. So $\lambda=\xi_3$ is impossible. Similarly, $\lambda=\xi_3^2$ is also impossible (otherwise, $F\in (x_5)+(x_3,x_4)^2$, a contradiction). Thus, we conclude $\lambda=1$. \end{proof}

\begin{lemma}\label{lem:3plift}
Let $g\in \Aut(X)$ of order $3^k$, $k\ge 1$.Then $g$ admits an $F$-lifting.
\end{lemma}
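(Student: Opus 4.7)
The plan is to exhibit an $F$-lifting of $\langle g\rangle$ by taking an arbitrary lifting of $g$ in $\GL(5,\C)$ and correcting it by a scalar factor. First I would choose $A\in\GL(5,\C)$ with $[A]=g$ and $A^{3^k}=I_5$; this is possible because $A^{3^k}$ must be a scalar matrix $cI_5$ (since $g^{3^k}=e$ in $\PGL(5,\C)$), and rescaling $A$ by a $3^k$-th root of $c^{-1}$ produces such a lift. Writing $A(F)=\lambda F$ for some $\lambda\in\C^*$, iterating the equation $3^k$ times together with $A^{3^k}=I_5$ gives $\lambda^{3^k}=1$.

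The key step is upgrading this to $\lambda^{3^{k-1}}=1$. For this I would observe that $A^{3^{k-1}}$ is a lifting of the order-$3$ element $g^{3^{k-1}}\in\Aut(X)$ in $\PGL(5,\C)$. Lemma \ref{lem:3lift} then forces $A^{3^{k-1}}(F)=F$, while a direct computation gives $A^{3^{k-1}}(F)=\lambda^{3^{k-1}}F$; hence $\lambda^{3^{k-1}}=1$. Consequently $\lambda^{-1}$ is itself a $3^{k-1}$-th root of unity and therefore admits a cube root $\eta$ among the $3^k$-th roots of unity.

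Setting $B:=\eta A$, one computes $B(F)=\eta^3\lambda F=F$, and $B^{3^k}=\eta^{3^k}A^{3^k}=\lambda^{-3^{k-1}}I_5=I_5$, so $B$ has order exactly $3^k$ in $\GL(5,\C)$ and still projects to $g$. Thus $\langle B\rangle\subset\GL(5,\C)$ is cyclic of order $3^k$, maps isomorphically onto $\langle g\rangle$ under $\pi$, and every power of $B$ leaves $F$ invariant; hence $\langle B\rangle$ is the desired $F$-lifting of $\langle g\rangle$.

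The main subtlety of the argument lies precisely in establishing $\lambda^{3^{k-1}}=1$ before carrying out the rescaling. Without that constraint, the required cube root $\eta$ of $\lambda^{-1}$ could be a primitive $3^{k+1}$-th root of unity, in which case $\langle\eta A\rangle$ would have order $3^{k+1}\neq 3^k=\mathrm{ord}(g)$ and so would fail to be a lifting of $\langle g\rangle$ at all. This is exactly the obstruction that the order-$3$ case (Lemma \ref{lem:3lift}), applied to $g^{3^{k-1}}$, removes.
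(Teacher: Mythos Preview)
Your argument is correct and follows essentially the same route as the paper's own proof: start with a lifting $A$ of $g$, use Lemma~\ref{lem:3lift} on the order-$3$ power $g^{3^{k-1}}$ to force $\lambda^{3^{k-1}}=1$, then rescale by a cube root of $\lambda^{-1}$. Your write-up is in fact a bit more careful than the paper's---you verify explicitly that the scalar $\eta$ can be taken as a $3^k$-th root of unity so that $B=\eta A$ still has order exactly $3^k$, and you treat the case $k=1$ uniformly rather than splitting it off.
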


\begin{proof}
If $k=1$, then, by Lemma \ref{lem:3lift}, $g$ admits an $F$-lifting. 

From now on, we may assume $k\ge 2$. Let $A$ be a lifting of $g$. Then ${\rm ord}(A)={\rm ord}(g)=3^k$, and $A(F)=\lambda F$ for some $\lambda\in\C^*$.  Note that $A^{3^{k-1}}$ is a lifting of $g^{3^{k-1}}$. Since ${\rm ord}(g^{3^{k-1}})=3$, by Lemma \ref{lem:3lift}, $A^{3^{k-1}}$ is an $F$-lifting of $g^{3^{k-1}}$. Then $F=A^{3^{k-1}}(F)=\lambda^{3^{k-1}}F$. Thus, $\lambda^{3^{k-1}}=1$. Choose any $\alpha\in \C^*$ such that $\alpha^3=\lambda^{-1}$. Then $\alpha A$ is an $F$-lifting of $g$. \end{proof}

\begin{proposition}\label{pp:3plift}
Let $g\in \Aut(X)$ of order $3^k$, $k\ge 1$.Then $k\le 2$, and  $g$ admits a unique $F$-lifting in $ \SL(5,\C)$. 

\end{proposition}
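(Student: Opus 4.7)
The plan is to proceed in two parts: first show $k \le 2$ by a modular analysis of the weights of a diagonalized $F$-lifting, then handle the $\SL(5,\C)$-uniqueness statement, where the main content is to establish $\det(A)^3 = 1$ for any $F$-lifting $A$.

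For $k \le 2$, we argue by contradiction: assume $k \ge 3$, which, by replacing $g$ with a suitable power, reduces to $\Ord(g) = 27$. By Lemma \ref{lem:3plift}, $g$ has an $F$-lifting which, after a linear change of coordinates, is diagonal $A = \Diag(\xi_{27}^{a_1}, \ldots, \xi_{27}^{a_5})$, and some $a_i$ (say $a_1$) is coprime to $3$ since $\Ord(A) = 27$. Smoothness and Lemma \ref{lem:nsm} force, for each $i$, an index $j_i$ with $2a_i + a_{j_i} \equiv 0 \pmod{27}$. Iterating from $i = 1$ and discarding at each step the possibility that $j_i$ coincides with a previously chosen index (each such coincidence yields a relation $c\cdot a_1 \equiv 0 \pmod{27}$ with $\gcd(c, 27) = 3$, forcing $9 \mid a_1$, contradiction), one obtains, up to relabeling, $a_1,\ a_2 = -2a_1,\ a_3 = 4a_1,\ a_4 = -8a_1,\ a_5 = 16a_1 \pmod{27}$. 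The constraint at $i = 5$ then requires $a_{j_5} \equiv -5 a_1 \pmod{27}$, which conflicts with each of $a_1, \ldots, a_5$ by the same $9 \mid a_1$ trick.

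For the $\SL(5,\C)$ claim, the $F$-liftings of $g$ are precisely $A, \xi_3 A, \xi_3^2 A$ (any two differ by a scalar $\alpha$, and $F$-invariance forces $\alpha^3 = 1$), with determinants $\det(A)\cdot\{1, \xi_3^2, \xi_3\}$ which are distinct; so at most one lies in $\SL(5,\C)$, and uniqueness within $\SL(5,\C)$ is also immediate from $\gcd(3, 5) = 1$. Existence amounts to $\det(A)^3 = 1$, which is automatic for $k = 1$ since $A^3 = I$. For $k = 2$, diagonalize $A$ with weights $a_i \in \Z/9\Z$ and set $n_t := |I_t|$ with $I_t := \{i : a_i \equiv t \pmod 3\}$, so $\det(A)^3 = \xi_3^{n_1 + 2n_2}$; the target is $n_1 \equiv n_2 \pmod 3$. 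The smoothness constraint mod $3$ shows $i \mapsto j_i$ preserves each $I_t$, while mod $9$ it is fixed-point free on $I_1, I_2$ (else $3 a_i \equiv 0 \pmod 9$, contradicting $a_i$ coprime to $3$), with cycle lengths divisible by $3$ (the order of $-2$ mod $9$). Combined with $\Ord(A) = 9$, this forces $|I_1|, |I_2| \in \{0, 3, 4\}$, leaving only $(n_0, n_1, n_2) \in \{(2,3,0), (2,0,3), (1,4,0), (1,0,4)\}$; the first two already satisfy the target.

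The main obstacle is excluding the remaining two cases. In $(1, 4, 0)$, the functional graph of $j$ on $|I_1| = 4$ must consist of a single $3$-cycle plus one tail vertex; direct calculation then forces two indices $p, q \in I_1$ with $a_p = a_q$. No monomial of $F$ can lie in $\C[x_p, x_q]$ (such a monomial would have weight $3 a_p \not\equiv 0 \pmod 9$), so the projective line $L \subset \P^4$ defined by the vanishing of the other three coordinates is contained in $X$. A weight computation shows that exactly one of the five partials $\partial F/\partial x_i$ fails to vanish identically on $L$, and it restricts to a quadratic form on $L \cong \P^1$; every such form has a zero over $\C$, producing a singular point on $X$ and the desired contradiction. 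Case $(1, 0, 4)$ is handled symmetrically by swapping the roles of $I_1$ and $I_2$.
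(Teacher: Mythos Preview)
Your exclusion of order $27$ follows the paper's own chain argument (one minor slip: the coincidence relations $c\cdot a_1\equiv 0\pmod{27}$ sometimes have $\gcd(c,27)=9$ rather than $3$, e.g.\ $j_3=1$ gives $9a_1\equiv 0$; this still forces $3\mid a_1$, so the contradiction survives). For existence of an $\SL(5,\C)$ lifting when $k=2$, you take a genuinely different route: the paper normalizes to $A=\Diag(\xi_9,\xi_9^{-2},\xi_9^4,\xi_9^{a_1},\xi_9^{a_2})$ and checks by computer algebra that only $a_1,a_2\in\{0,3,6\}$ survive, whence $\det(A)^3=1$; your residue-class count and line argument are computer-free and more structural.

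There is, however, a real gap in your case enumeration. The constraints you derive on the functional graph of $i\mapsto j_i$ (fixed-point free on $I_1,I_2$, cycle lengths divisible by $3$) only give $|I_t|\in\{0,3,4,5\}$ for $t=1,2$; nothing you wrote excludes $|I_t|=5$, so $(n_0,n_1,n_2)=(0,5,0)$ and $(0,0,5)$ are missing from your list, and neither satisfies $n_1\equiv n_2\pmod 3$. Your line argument does extend to these cases: since the set of weights occurring in $I_1$ is closed under $w\mapsto -2w\pmod 9$, all of $1,4,7$ must appear; a short check on the two possible multiplicity patterns (one weight with multiplicity $3$, or two with multiplicity $2$) shows one can always pick the repeated weight $w$ so that $-2w$ has multiplicity one among the remaining three indices, after which your argument for $(1,4,0)$ applies verbatim. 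But this case has to be stated and treated.
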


\begin{proof}
First we show $k\le 2$, and it suffices to show that there exists no $g\in \Aut(X)$ of order 27. Suppose $g\in \Aut(X)$ is of order $27$. By Lemma \ref{lem:3plift}, $g$ admits an $F$-lifting, say $A$. Since $A$ is of finite order, up to linear change of coordinates, we may assume $A$ is diagonal. Replacing $A$ by suitable power $A^j$ where $j$ and $3$ are coprime, we may assume $A=\Diag(\xi_{27},\xi_{27}^{a_1},\xi_{27}^{a_2},\xi_{27}^{a_3},\xi_{27}^{a_4})$. By $A(F)=F$ and $A(x_1^3)=\xi_{27}^3x_1^3\neq x_1^3$, we have $x_1^3\notin F$. Then, by Proposition \ref{pp:nsmcubic} (1), $x_1^2x_j\in F$ for some $j\in \{2,3,4,5\}$. Thus, by suitable linear change of coordinates, we may assume $j=2$ and $x_1^2x_2\in F$. Then by $A(F)=F$ and $x_1^2x_2=A(x_1^2x_2)=\xi_{27}^{2+a_1}x_1^2x_2$, we have $A=\Diag(\xi_{27},\xi_{27}^{-2},\xi_{27}^{a_2},\xi_{27}^{a_3},\xi_{27}^{a_4})$. Repeating the process above, we can show that, up to linear change of coordinates, we may assume $x_1^2x_2$, $x_2^2x_3$, $x_3^2x_4$, $x_4^2x_5\in F$, and $A=\Diag(\xi_{27},\xi_{27}^{-2},\xi_{27}^{4},\xi_{27}^{-8},\xi_{27}^{16})$. Then,  for any $i\in \{1,2,3,4,5\}$, $A(x_5^2x_i)\neq x_5^2x_i$, and, by $A(F)=F$, $x_5^2x_i\notin F$, contradicting to smoothness of $X$ (by Proposition \ref{pp:nsmcubic} (1)). Thus, $k\le 2$.

Next we show that $g$ admits an $F$-lifting in $\SL(5,\C)$. Suppose $k=1$. By Lemma \ref{lem:3plift}, $g$ admits an $F$-lifting, say $A$. Since ${\rm ord}(A)={\rm ord}(g)=3$, it follows that $\Det(A)=\xi_3^a$ for some $a\in \{0,1,2\}$. Choose any $b\in \Z$ such that $5b+a\equiv 0\; ({\rm mod}\; 3)$. Then $A_g:=\xi_3^bA\in \SL(5,\C)$ is an $F$-lifting of $g$.

Suppose $k=2$. Note that $g$ admits an $F$-lifting in $\SL(5,\C)$ if and only if $g^j$ admits an $F$-lifting in $\SL(5,\C)$. Thus, if necessary, we may replacing $g$ by a suitable power $g^j$ where $3\nmid j$. Choose any $F$-lifting, say $A$, of $g$. Then, as in the proof of Lemma \ref{lem:3lift} and as in the argument above to ruling out $k=3$ (roughly speaking, if necessary, replacing the pair $(A, g)$ by a suitable power $(A^j,g^j)$,using linear change of coordinates and Proposition \ref{pp:nsmcubic} repeatedly), we may assume $A=\Diag(\xi_9,\xi_9^{-2},\xi_9^{4},\xi_9^{a_1},\xi_9^{a_2})$ where $0\le a_2\le a_1\le 8$. Then, by smoothness of $X$ and using case by case check with the help of computer algebra (e.g., Mathematica), one can conclude that $(a_1,a_2)=$ $(0,0)$, $(3,0)$, $(3,3)$, $(6,0)$, $(6,3)$, or $(6,6)$. Then $({\rm det}(A))^3=1$. Thus, for suitable $i\in\{0,1,2\}$, $\xi_3^iA$ is in $\SL(5,\C)$ and $\xi_3^iA$ is an $F$-lifting of $g$.

Finally we show uniqueness. Let $A_1, A_2\in \SL(5,\C)$ be two $F$-lifting of $g$. Then $A_1=\lambda A_2$ for some $\lambda\in \C^*$ such that $\lambda^5=1$. Since ${\rm ord}(A_1)={\rm ord}(A_2)=3^k$, it follows that $\lambda^{3^k}=1$. Thus, $\lambda=1$ and $A_1=A_2$.  This completes the proof of the proposition. \end{proof}

The following theorem will be frequently used in the later sections.

\begin{theorem}\label{thm:Flift}
Let $G\subset \Aut(X)$ be a subgroup. Then $G$ admits an $F$-lifting.
\end{theorem}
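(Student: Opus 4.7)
The plan is to apply Theorem \ref{liftable} with $p=3$. Since $F$ is $G$-invariant by assumption, what remains is to verify its two hypotheses: (1) the Sylow $3$-subgroup $G_3 \subset G$ is $F$-liftable, and (2) either $G_3$ has no element of order $9$ or $G$ has no normal subgroup of index $3$.

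The key step is condition (1), which is essentially immediate from Proposition \ref{pp:3plift}. Define $\phi: G_3 \to \SL(5,\C)$ by sending each $g \in G_3$ to its unique $F$-lifting in $\SL(5,\C)$ (which exists, and has order equal to $\Ord(g)\in\{1,3,9\}$, by that proposition). The uniqueness clause automatically makes $\phi$ a group homomorphism: for $g_1, g_2 \in G_3$, the matrix $\phi(g_1)\phi(g_2)$ lies in $\SL(5,\C)$, projects to $g_1 g_2$, and satisfies $(\phi(g_1)\phi(g_2))(F) = \phi(g_1)(\phi(g_2)(F)) = F$, so it is an $F$-lifting of $g_1 g_2$ in $\SL(5,\C)$ and must equal $\phi(g_1 g_2)$. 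Injectivity is immediate, since $\zeta I_5 \in \SL(5,\C)$ is an $F$-lifting of the identity only when $\zeta^3 = \zeta^5 = 1$, forcing $\zeta=1$. Hence $\widetilde{G_3} := \phi(G_3) \subset \SL(5,\C)$ is an $F$-lifting of $G_3$.

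For condition (2), if $G_3$ has no element of order $9$ there is nothing to check. Otherwise I would induct on $|G|$: whenever $G$ admits a normal subgroup $N$ of index $3$, the induction hypothesis furnishes an $F$-lifting $\widetilde{N} \subset \GL(5,\C)$, and I would extend this to an $F$-lifting of $G = \langle N, g\rangle$ (for a coset representative $g \in G\setminus N$) by choosing an $F$-lifting $\widetilde g$ of $g$ whose cube matches the element of $\widetilde N$ sitting above $g^3$, and whose conjugation on $\widetilde N$ lifts the action of $g$ on $N$. The main obstacle is precisely this gluing: the a priori discrepancy is an element of $\mu_3$, and its vanishing is ultimately forced because the obstruction to $F$-liftability of $G$ (the class in $H^2(G,\mu_3)$ of the central extension $1\to\mu_3\to\widetilde G\to G\to 1$ obtained by pulling back the natural quotient $\GL(5,\C)\to\PGL(5,\C)$ restricted to the $F$-stabilizer) restricts injectively to $H^2(G_3,\mu_3)$, since $\mu_3$ is $3$-torsion while $[G:G_3]$ is prime to $3$; the vanishing of the obstruction on $G_3$ from the previous step therefore forces vanishing on $G$. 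I would translate this cohomological reason into an explicit cocycle adjustment on coset representatives so as to remain within the elementary style of \cite{OY19}.
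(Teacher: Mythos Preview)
Your argument is correct, but the route differs from the paper's, and your own plan contains a detour worth pointing out.

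The paper does \emph{not} apply Theorem~\ref{liftable} to $G$ itself. Instead it constructs the lifting directly: using Proposition~\ref{pp:3plift} it takes $H_0\subset\SL(5,\C)$ to be the (unique) $F$-lifting of $G_3$, and for each remaining prime $p_i\ne 3$ it takes an $F$-lifting $H_i$ of $G_{p_i}$ (via Theorem~\ref{liftable} or Proposition~\ref{pp:not3lift}). Setting $\widetilde G:=\langle H_0,H_1,\dots,H_n\rangle$, the only thing to check is that $\Ker(\pi)\cap\widetilde G=\{I_5\}$. Any element of this kernel is $\xi_3^{\,j}I_5$ (since it fixes $F$ of degree $3$), hence has determinant $\xi_3^{5j}$; on the other hand every generator of $\widetilde G$ has determinant of order prime to $3$ (those in $H_0$ lie in $\SL(5,\C)$, those in $H_i$ have order a power of $p_i$), so the determinant of any kernel element has order prime to $3$. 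This forces $j=0$. The argument is entirely elementary and crucially exploits the $\SL(5,\C)$ part of Proposition~\ref{pp:3plift}, not just existence of an $F$-lifting of $G_3$.

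Your verification of condition~(1) is essentially the same as the paper's construction of $H_0$ (one small wrinkle: to invoke the uniqueness clause of Proposition~\ref{pp:3plift} you need to know $\phi(g_1)\phi(g_2)$ has the right order, but this follows from the same $\zeta^3=\zeta^5=1$ computation you use for injectivity). Where you diverge is condition~(2): you propose an induction, extending a lifting of a normal index-$3$ subgroup, and then justify the gluing by the transfer argument that $\Res:H^2(G,\mu_3)\to H^2(G_3,\mu_3)$ is injective. This is valid---but note that this last cohomological sentence \emph{by itself} proves the whole theorem directly from your step~(1), with no need for Theorem~\ref{liftable}, no induction, and no case split on whether $G_3$ contains $C_9$. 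So your plan is sound but redundant: the restriction--corestriction argument is the entire proof, and the inductive scaffolding around it can be dropped. The trade-off is that the paper's determinant trick stays within the elementary style and gives an explicit lifting, whereas your approach is shorter once one is willing to invoke group cohomology.
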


\begin{proof}

Suppose $|G|=3^kp_1^{k_1}\cdots p_n^{k_n}$, where $p_i\neq3$ are distinct prime numbers, $k\geq0$, $k_i\geq0$, for all $1\leq i\leq n$. For any $i\in \{1,...,n\}$, by Theorem \ref{liftable}, a Sylow $p_i$-subgroup $G_{p_i}$ of $G$ admits an $F$-lifting, say $H_i$. For a Sylow $3$-subgroup $G_3$ of $G$, let $$H_0=\{A_g | g\in G_3 \text{ and } A_g \text{ is the unique }F\text{-lifting of }g \text{ in }\SL(5,\C) \}.$$ Note that $H_0$ is well-defined by Proposition \ref{pp:3plift}. Let $\widetilde{G}=<H_0,H_1,\cdots,H_n><\GL(5,\mathbb{C})$. Next we show $\widetilde{G}$ is an $F$-lifting of $G$. By definition of $\widetilde{G}$, we have $A(F)=F$ for any $A\in \widetilde{G}$. Recall that $\pi: \GL(5,\C)\longrightarrow \PGL(5,\C)$ is the natural quotient map. Clearly $\pi(\widetilde{G})=G$. Let $A\in ( \Ker(\pi)\cap \widetilde{G})$. Then $A=\lambda I_5$ for some $\lambda\in \C^*$. By $A(F)=F$ and $F$ is of degree $3$,  we have $A=\xi_3^j I_5$ for some $j\in \{0,1,2\}$. Then we have $\Det(A)=\xi_3^{5j}$. On the other hand $A=B_1^{l_1}B_2^{l_2}\cdots B_m^{l_m}$, where $B_i\in (H_0\bigcup H_1\bigcup \cdots \bigcup H_n)$ and $l_i\in \Z$.
 Then $\Det(A)=\Det(B_1)^{l_1}\cdots \Det(B_m)^{l_m}$ and $\Det(A)^{p_1^{k_1}\cdots p_n^{k_n}}=1$. Thus, $(\xi_3^{5j})^{p_1^{k_1}\cdots p_n^{k_n}}=1$ and $A=I_5$. Then ${\rm Ker}(\pi)\cap \widetilde{G}=\{I_5\}$ and $\widetilde{G}$ is an $F$-lifting of $G$. This completes the proof of the theorem. \end{proof}

\begin{remark}
Unlike a smooth cubic threefold, the automorphism group of a smooth quintic threefold does not necessarily have an $F$-lifting (see \cite[Section 4]{OY19}).

\end{remark}

\medskip

\section{Abelian subgroups}\label{ss:Ab}

{\it Notation: In Sections \ref{ss:Ab}-\ref{ss:nonsol}, $X$ is a smooth cubic threefold defined by an irreducible homogeneous polynomial $F$.}

In this section, we classify abelian groups which can faithfully act on smooth cubic threefolds (Theorem \ref{thm:ab}).

 \begin{proposition}\label{pp:porder}
Let $g\in \Aut(X)$  be of primary order. Then ${\rm ord}(g)=2^a,3^b,5,$ or $11$, where $a,b>0$. In particular, if a prime number $p$ divides $|\Aut(X)|$, then $p\in \{2,3,5,11\}$.
\end{proposition}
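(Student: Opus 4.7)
The ``in particular'' clause follows from the primary order assertion by Cauchy's theorem, so the task reduces to classifying primary orders. The first observation is that if $g$ has order $p^k$, then $g^{p^{k-1}}$ has order $p$, and by the cited result \cite{GL11} we have $p \in \{2,3,5,11\}$. The remaining work is to rule out $k \ge 2$ in the two cases $p=5$ (order $25$) and $p=11$ (order $121$); no upper bounds for $p=2,3$ are asserted by the statement, though the bound $k \le 2$ for $p=3$ already appears in Proposition \ref{pp:3plift}.

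To rule out order $N \in \{25,121\}$, I would take $g \in \Aut(X)$ of order $N$ and, using $3 \nmid N$, invoke Proposition \ref{pp:not3lift} to obtain an $F$-lifting $A \in \GL(5,\C)$ of the same order. Diagonalizing yields $A = \Diag(\xi_N^{b_1},\ldots,\xi_N^{b_5})$, and because $\Ord(A)=N$ at least one exponent is coprime to $N$. The key step is to use smoothness of $X$ via Lemma \ref{lem:nsm}: for each $i$ there exists $j=j(i)$ with $x_i^2 x_{j(i)} \in F$, and invariance of $F$ under $A$ then forces $b_{j(i)} \equiv -2 b_i \pmod{N}$. Thus the set $S := \{b_1,\ldots,b_5\} \pmod{N}$ is closed under multiplication by $-2$.

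The contradiction now follows from a single observation: for any $b_i$ coprime to $N$, its $(-2)$-orbit in $\Z/N\Z$ has cardinality equal to the multiplicative order of $-2$ modulo $N$. A short computation gives that this order is $20$ when $N=25$ and $55$ when $N=121$; both exceed $|S| \le 5$, the desired contradiction. The only delicate point I anticipate is the order computation modulo $121$ --- one must rule out the possibility that the order collapses to $5$ (inherited from $(\Z/11\Z)^\times$), which amounts to checking $(-2)^5 \not\equiv 1 \pmod{121}$. Everything else is routine bookkeeping of the smoothness constraint.
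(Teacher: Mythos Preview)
Your argument is correct. The paper, however, takes a much shorter path: it simply invokes \cite[Theorem~1.3]{GL13} (see also \cite[Theorem~5.1]{OY19}), which for a smooth degree-$d$ hypersurface in $\P^n$ forces any primary order $p^k$ with $p\nmid d(d-1)$ to divide one of the numbers $(1-d)^\ell-1$, $1\le \ell\le n+1$. For $d=3$, $n=4$ these are $-3,3,-9,15,-33$, so the only admissible prime powers with $p\ne 2,3$ are $5$ and $11$, and the cases $p=2,3$ are unconstrained (matching the statement). Your orbit argument under multiplication by $-2=1-d$ on the exponent set is precisely the mechanism underlying that theorem: the orbit of a unit $b_i$ has length $\ell$ exactly when $N\mid(-2)^\ell-1$, so requiring $\ell\le 5$ recovers the same list of numbers. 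In effect you have re-proved the relevant special case of \cite{GL13} from scratch rather than citing it; this makes your proof more self-contained for $N=25,121$, at the cost of a longer write-up and still needing the external reference \cite{GL11} for the prime classification.
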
 
  
  \begin{proof}
  
  Consider the numbers $(1-3)^l-1$, for $1\leq l \leq 5$. These five numbers are $-3$, $3$, $-9$, $15$, $-33$. Thus, by \cite[Theorem~1.3]{GL13} (see also \cite[Theorem~5.1]{OY19}),   ${\rm ord}(g)$ is $2^a,3^b,5$ or $11$. \end{proof}

\medskip

\begin{theorem}\label{thm:sylow5}
If $5$ divides $|\Aut(X)|$, then a Sylow $5$-subgroup of $\Aut(X)$ is isomorphic to $C_5$.
\end{theorem}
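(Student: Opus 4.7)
The plan is to rule out $C_5 \times C_5 \subset \Aut(X)$; by Proposition \ref{pp:porder} every element of $5$-power order has order exactly $5$, so the Sylow $5$-subgroup $G_5$ has exponent $5$. A standard center argument (take a central $z\in G_5$ of order $5$ and any $y$ of order $5$ outside $\langle z\rangle$; then $\langle y,z\rangle\cong C_5\times C_5$) shows that any $5$-group of order $\ge 25$ and exponent $5$ contains $C_5 \times C_5$. Hence ruling out $C_5^2$ forces $|G_5|\le 5$, and since $5\mid |\Aut(X)|$ we conclude $G_5 \cong C_5$.

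Suppose for contradiction that $H \cong C_5 \times C_5 \subset \Aut(X)$. Since $\gcd(3,25)=1$, Proposition \ref{pp:not3lift} produces a unique $F$-lifting $\widetilde H \subset \GL(5,\C)$, isomorphic to $C_5^2$ with every nontrivial element of order $5$. The elements of $\widetilde H$ commute and are diagonalizable, hence simultaneously diagonalizable, so after a change of coordinates the $i$-th diagonal entry defines a character $\chi_i\colon \widetilde H \to \mu_5$, which I view as an element of the $\F_5$-vector space $\widetilde H^\vee \cong \F_5^2$. Faithfulness of the embedding $\widetilde H \hookrightarrow \mu_5^5$ says that $\chi_1,\dots,\chi_5$ separate points of $\widetilde H$, so they span $\widetilde H^\vee$; thus $\dim_{\F_5}\mathrm{span}\{\chi_1,\dots,\chi_5\}=2$.

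Next I would invoke smoothness of $X$ via Lemma \ref{lem:nsm}: for each $i\in\{1,\dots,5\}$ there exists $j=j(i)$ with $x_i^2 x_{j(i)}\in F$. Since $A(F)=F$ for all $A\in\widetilde H$, every monomial appearing in $F$ must be $\widetilde H$-invariant, which gives $\chi_i^{2}\chi_{j(i)}=\mathbf{1}$; equivalently, in additive notation on $\F_5^2$, $\chi_{j(i)}=3\chi_i$. Hence the set $S:=\{\chi_1,\dots,\chi_5\}\subset \F_5^2$ has the property that $S\setminus\{0\}$ is closed under multiplication by $3$.

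The main obstacle, which I expect to resolve by a short counting argument, is to extract the arithmetic punchline: $3$ has multiplicative order $4$ in $\F_5^\times$ and acts freely on $\F_5^2\setminus\{0\}$, so each of its orbits on the nonzero vectors has size exactly $4$. Since $|S|\le 5$, the only options are $|S\setminus\{0\}|=0$, which would make $\widetilde H$ act trivially on $\C^5$ and contradict faithfulness, or $|S\setminus\{0\}|=4$, in which case the four nonzero characters fill a single orbit $\{v,2v,3v,4v\}$ on one line $\F_5 v$, forcing $\mathrm{span}\,S$ to be at most $1$-dimensional, in contradiction to $\dim_{\F_5}\mathrm{span}\,S=2$. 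Either way we reach a contradiction, and the proof is complete.
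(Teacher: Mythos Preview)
Your proof is correct, and it follows a genuinely different route from the paper's. Both arguments reduce to excluding $C_5^2$ and pass to a simultaneously diagonalized $F$-lifting $\widetilde H\subset\GL(5,\C)$, but from there they diverge. The paper works concretely: it normalizes one generator to $A_1=\Diag(\xi_5,\xi_5^{-2},\xi_5^4,\xi_5^2,\xi_5^{a})$ by repeatedly applying Proposition~\ref{pp:nsmcubic}(1) to force $x_1^2x_2,x_2^2x_3,x_3^2x_4\in F$, then replaces $A_2$ by $A_1^jA_2$ to make its first entry $1$, whence the three monomial constraints force $A_2=\Diag(1,1,1,1,\xi_5^{b})$ with $b\neq 0$, and finally $x_5^2x_i\notin F$ for all $i$ contradicts smoothness. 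Your argument instead packages the diagonal entries as characters $\chi_1,\dots,\chi_5\in\widetilde H^\vee\cong\F_5^2$, reads Lemma~\ref{lem:nsm} as the closure condition $3\chi_i\in S$ for $S=\{\chi_1,\dots,\chi_5\}$, and then uses that $3$ has order $4$ in $\F_5^\times$ so the nonzero part of $S$ is a union of $4$-element orbits; with $|S|\le 5$ this forces $S$ into a single line of $\F_5^2$, contradicting faithfulness. Your approach is cleaner and makes the arithmetic obstruction (the order of $1-d$ modulo $p$, here $1-3=-2\equiv 3$) completely transparent, so it would adapt readily to analogous statements for other degrees and primes; the paper's hands-on normalization has the virtue of producing, as a by-product, the explicit shape of $A_1$ recorded in Table~\ref{tab:ab} No.~6. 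Your use of Proposition~\ref{pp:not3lift} in place of Theorem~\ref{thm:Flift} is also fine, since $\gcd(3,25)=1$.
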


\begin{proof}
Any finite $p$-group of order $p^k$ contains a subgroup of order $p^l$ for any $l\in\{0,1,...,k\}$. Thus, in order to prove the theorem, it suffices to show that $\Aut(X)$ contains no subgroup of order $25$. Suppose a subgroup $G\subset \Aut(X)$ is of order $25$. By Proposition \ref{pp:porder}, $G$ is not isomorphic to $C_{25}$. Then $G\cong C_5^2$. Suppose $G=\langle g_1,g_2\rangle$ where ${\rm ord}(g_1)={\rm ord}(g_2)=5$ and $g_1g_2=g_2g_1$. By Theorem \ref{thm:Flift}, $G$ admits an $F$-lifting, say $\tilde{G}$. Let $A_i\in \tilde{G}$ be $F$-lifting of $g_i$, $i=1,2$. Since $A_1A_2=A_2A_1$, under suitable linear change of coordinates, we may assume both $A_i$ are diagonal matrices. Then $A_i=\Diag(\xi_5^{a_{i1}},...,\xi_5^{a_{i5}})$, $i=1,2$. As in the proof of Proposition \ref{pp:3plift}, by smoothness of $X$ and $A_1(F)=F$, we may assume $A_1=\Diag(\xi_5,\xi_5^{-2},\xi_5^{4},\xi_5^{2},\xi_5^{a_{15}})$ and $x_1^2x_2,x_2^2x_3,x_3^2x_4\in F$. Then, replacing $A_2$ by $A_1^jA_2$ for suitable $j$, we may assume $a_{21}=0$. Then by $A_2(F)=F$ and $x_1^2x_2,x_2^2x_3,x_3^2x_4\in F$, we have $A_2=\Diag(1,1,1,1,\xi_5^{a_{25}})$. Since ${\rm ord}(A_2)=5$, it follows that $\xi_5^{a_{25}}\neq 1$. Then $A_2(x_5^2x_i)\neq x_5^2x_i$ for any $i\in \{1,...,5\}$. Thus, by $A_2(F)=F$, $x_5^2x_i\notin F$ for any $i$, a contradiction to Proposition \ref{pp:nsmcubic} (1). 
\end{proof}

\begin{theorem}[{\cite[Proposition 1.1]{Ro09}, see also \cite[Theorem 2.10]{GL11}}]\label{thm:sylow11}
If $11$ divides $|\Aut(X)|$, then $X$ is isomorphic to Klein cubic threefold (in particular, $\Aut(X)\cong \PSL(2,11)$ \cite{Ad78}).
\end{theorem}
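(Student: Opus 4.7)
The plan is to exploit the strong constraints that an order-11 symmetry places on a smooth cubic threefold. By Cauchy's theorem there is $g \in \Aut(X)$ with ${\rm ord}(g) = 11$, and by Theorem \ref{thm:Flift} there is an $F$-lifting $A \in \GL(5,\C)$ of $g$ with $A(F) = F$. Since $A^{11} = I_5$, after a linear change of coordinates I may assume $A = \Diag(\xi_{11}^{e_1}, \xi_{11}^{e_2}, \xi_{11}^{e_3}, \xi_{11}^{e_4}, \xi_{11}^{e_5})$ with $e_i \in \Z/11\Z$.

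The key step is to pin down the eigenvalue pattern via smoothness. By Proposition \ref{pp:nsmcubic}(1), for each $i$ there exists $j(i) \in \{1,\ldots,5\}$ with $x_i^2 x_{j(i)} \in F$, and $A(F) = F$ then forces $2e_i + e_{j(i)} \equiv 0 \pmod{11}$. Viewing $j$ as a self-map of $\{1,\ldots,5\}$ and iterating around any orbit-cycle of length $k$ yields $((-2)^k - 1) e_{i_1} \equiv 0 \pmod{11}$. Since $(-2)^k - 1 \pmod{11}$ takes the values $-3, 3, -9, 4$ for $k = 1, 2, 3, 4$ and vanishes for $k=5$, any short cycle of $j$ forces the associated eigenvalues to be zero. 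As $g$ has order $11$, some $e_i$ is nonzero, so $j$ must contain a 5-cycle, which exhausts all five indices. Thus $j$ is a single 5-cycle; after relabeling I may take $j = (1\,2\,3\,4\,5)$, whence $e_{i+1} \equiv -2 e_i \pmod{11}$, and replacing $g$ by a suitable power coprime to $11$ normalizes $e_1 = 1$, giving $(e_1, \ldots, e_5) = (1, 9, 4, 3, 5) \pmod{11}$.

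I would then determine $F$ explicitly. Since $A(F) = F$, every monomial of $F$ has exponent vector $(d_1,\ldots,d_5)$ satisfying $\sum d_i = 3$ and $\sum e_i d_i \equiv 0 \pmod{11}$. A direct enumeration of the $35$ degree-3 monomials in five variables shows the only solutions are the five Klein monomials $x_1^2 x_2$, $x_2^2 x_3$, $x_3^2 x_4$, $x_4^2 x_5$, $x_5^2 x_1$. The smoothness argument from the previous step, applied to each $i$, forces every coefficient to be nonzero, so $F = \sum_{i=1}^{5} c_i x_i^2 x_{i+1}$ with $c_i \in \C^*$ (indices cyclic).

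To finish, I would normalize by a diagonal rescaling $x_i \mapsto \lambda_i x_i$, which multiplies the coefficient $c_i$ by $\lambda_i^2 \lambda_{i+1}$. The associated $5 \times 5$ exponent matrix is $2I + N$ where $N$ is the cyclic shift, whose determinant equals $\prod_{k=0}^{4}(2 + \omega^k) = 2^5 + 1 = 33 \neq 0$; hence the induced map $(\C^*)^5 \to (\C^*)^5$ is surjective and I can rescale so that all $c_i = 1$, identifying $X$ projectively with the Klein cubic threefold. The main obstacle is the cycle analysis in step two, where the arithmetic of $-2 \bmod 11$ must align just right to force a single 5-cycle; once that is in place, everything reduces to a finite monomial enumeration and a short linear-algebra computation.
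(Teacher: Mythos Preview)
Your argument is correct. The paper does not actually prove this theorem; it is stated with citations to \cite{Ro09}, \cite{GL11}, and \cite{Ad78} and used as a black box. Your write-up supplies a self-contained proof using the paper's own tools (Theorem~\ref{thm:Flift} and Proposition~\ref{pp:nsmcubic}), which is exactly in the spirit of the arguments the paper gives for the other Sylow subgroups. One small point of phrasing: the map $j$ you build is a priori only a self-map of $\{1,\dots,5\}$, not a permutation, so ``orbit-cycle'' should be read as a cycle in the functional graph; your observation that $e_{j(i)}\equiv -2e_i$ then propagates $e_i=0$ backward from any short cycle, and a 5-cycle in a 5-element set forces $j$ to be a genuine 5-cycle permutation, so the conclusion stands. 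The determinant computation $\prod_{k=0}^{4}(2+\omega^k)=2^5+1=33$ and the monomial enumeration are both fine.
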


\begin{theorem}\label{thm:ab}
Let $G\subset \Aut(X)$ be an abelian subgroup. Then $G$ is isomorphic to one of the following $25$ groups: $C_2$, $C_3$, $C_4$, $C_2^2$, $C_5$, $C_6$, $C_8$, $C_4\times C_2$, $C_9$, $C_3^2$, $C_{11}$, $C_{12}$, $C_{6}\times C_2$, $C_{15}$, $C_{16}$, $C_{18}$, $C_6\times C_3$, $C_{24}$, $C_{12}\times C_2$, $C_{9}\times C_3$, $C_3^3$, $C_{12}\times C_3$, $C_6^2$, $C_6\times C_3^2$, $C_3^4$. Moreover, up to linear change of coordinates (equivalently, up to conjugation in $\GL(5,\C)$), one of the $\tilde{H}$ in the Table \ref{tab:ab} is an $F$-lifting of $G$.
\end{theorem}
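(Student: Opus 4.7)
The plan is to reduce the classification to an arithmetic problem on the characters of a diagonal torus, cut the surviving possibilities down using the smoothness criteria of Lemma \ref{lem:nsm} and Proposition \ref{pp:nsmcubic}, and then realise each survivor inside one of the maximal examples of Example \ref{mainex}, which simultaneously produces the $F$-liftings in Table \ref{tab:ab}.

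First I would invoke Theorem \ref{thm:Flift} to pick an $F$-lifting $\widetilde{G}\subset \GL(5,\C)$ of $G$. Because $\widetilde{G}$ is finite abelian, its elements are pairwise commuting matrices of finite order, and so after a linear change of coordinates $\widetilde{G}$ lies in the standard diagonal torus $T$. Writing $\chi_i\colon \widetilde{G}\to \C^*$ for the character by which $\widetilde{G}$ acts on $x_i$, the condition $A(F)=F$ for all $A\in \widetilde{G}$ is equivalent to saying every monomial of $F$ has trivial $\widetilde{G}$-character. Lemma \ref{lem:nsm} then forces, for each $i\in\{1,\ldots,5\}$, the existence of some $j(i)$ with $2\chi_i+\chi_{j(i)}=0$ in $\widehat{\widetilde{G}}$, and Proposition \ref{pp:nsmcubic} rules out configurations in which $F$ sits inside an ideal of the form $(x_p,x_q)$ or $(x_i)+(x_j,x_k)^2$.

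Next I would decompose by Sylow subgroups $G=G_2\times G_3\times G_5\times G_{11}$; Proposition \ref{pp:porder} shows no other primes occur. Theorem \ref{thm:sylow11} reduces the $11$-divisible case to the Klein cubic, and since the centraliser of an order-$11$ element in $\PSL(2,11)$ is $C_{11}$, the only abelian $G$ with $11\mid |G|$ is $C_{11}$ itself. Theorem \ref{thm:sylow5} forces $G_5\cong C_5$, and Proposition \ref{pp:3plift} bounds the exponent of $G_3$ by $9$. Applying the diagonal smoothness relations $2\chi_i+\chi_{j(i)}\equiv 0$ separately modulo $3$ and modulo $2^k$, one bounds the rank of $G_3$ by $4$ (with equality witnessed by $C_3^4\subset G_{X_1}$), the rank of $G_2$ by $2$, and the exponent of the cyclic part of $G_2$ by $16$ (witnessed by $C_{16}\subset G_{X_4}$, via the chain $a_i\mapsto -2a_i$ on the diagonal weights). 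A final enumeration of compatible products of these Sylow pieces, pruned by the same diagonal analysis (for instance to rule out $C_{48}$ by pushing the chain $a_i\mapsto -2a_i$ through the map $j$), leaves precisely the $25$ groups listed.

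The main obstacle will be the clean enumeration of admissible $5$-tuples $(\chi_1,\ldots,\chi_5)$ satisfying all the smoothness relations, together with the combinatorial control over mixed products across primes; this is the GAP-assisted case analysis alluded to in the introduction, and the delicate point is confirming that the constraints extracted from Proposition \ref{pp:nsmcubic} really are tight enough to recover exactly the $25$ groups and no more. For the realisability half, one verifies by inspection that each of these $25$ groups embeds, after diagonalisation, as a subgroup of the diagonal part of one of $G_{X_1},\ldots,G_{X_6}$ in Example \ref{mainex}, which simultaneously produces the corresponding row of Table \ref{tab:ab}.
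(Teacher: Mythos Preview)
Your approach is essentially the paper's: invoke Theorem \ref{thm:Flift} to lift, simultaneously diagonalise, translate smoothness via Proposition \ref{pp:nsmcubic} into constraints on the diagonal characters, and enumerate. The paper's proof is equally sketchy---it works out only $C_9$ and $C_4^2$ in detail and leaves the remaining finite case-check to the reader---so your Sylow-factorised organisation is a mild repackaging rather than a different route.

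One point to correct: you describe the ``Moreover'' clause as a realisability statement, to be checked by embedding each of the $25$ groups into the diagonal part of some $G_{X_i}$. That is not what is being asserted. The claim is that for \emph{every} smooth cubic $X$ and every abelian $G\subset\Aut(X)$, the $F$-lifting $\widetilde{G}$ is conjugate in $\GL(5,\C)$ to one of the explicit diagonal groups $\widetilde{H}$ listed in Table \ref{tab:ab}; in other words, the table records all admissible diagonal weight-tuples, not merely one witness per isomorphism type. This is exactly the output of the enumeration of character $5$-tuples you outline, so your argument does produce it---but it is not obtained by looking at the six special threefolds, and indeed several rows of the table (e.g.\ the generic entries for $C_2$, $C_3$, $C_3^2$) correspond to infinite families of $X$ rather than to any single $X_i$.
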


\begin{proof}
The idea of the proof is the following: firstly, by Theorem \ref{thm:Flift}, $G$ admits an $F$-lifting, say $\tilde{G}$; secondly, since $G$ and $\tilde{G}\subset \GL(5,\C)$ are abelian groups, we may assume $\tilde{G}$ consists of diagonal matrices; lastly, we use smoothness of $X$ and Proposition \ref{pp:nsmcubic} to find an explicit $\tilde{G}$ (resp. to rule out $G$) if $G$ is in the list of the $25$ groups in the theorem (resp. if $G$ is not in the list). We give the details of the proof for two groups: $C_9$, $C_4^2$, and we leave the details for other groups to the readers (note that by consideration of subgroups, we only need to rule out {\it finitely} many abelian groups, and hence we can do case by case check).

Suppose $G\cong C_9$. Then $G$ admits an $F$-lifting $\tilde{G}$. Let $A\in \tilde{G}$ be a generator. We may assume $A$ is a diagonal matrix and $A=\Diag(\xi_9,\xi_9^a,\xi_9^b,\xi_9^c,\xi_9^d)$ where $0\le a,b,c,d\le 8$. By smoothness of $X$ and Proposition \ref{pp:nsmcubic} (see proof of Proposition \ref{pp:3plift}), we may assume $x_1^2x_2,x_2^2x_3\in F$ and $a=7$, $b=4$. Then, without loss of generality,  we may assume $A=\Diag(\xi_9,\xi_9^7,\xi_9^4,\xi_9^c,\xi_9^d)$ where $0\le c \le d \le 8$. Then by considering all the 45 possibilities for the pair $(c,d)$ via direct computation, smoothness of $X$ implies $(c,d)=(0,0)$, $(0,3)$, $(0,6)$, $(3,3)$, $(3,6)$, or $(6,6)$. Thus, $\tilde{G}$ is one of  the groups $\tilde{H}\subset \GL(5,\C)$ in case No. 12 of Table \ref{tab:ab}. Then $A^3=\Diag(\xi_3,\xi_3,\xi_3,1,1)$. Thus, for any monomial $m=x_1^{i_1}...x_5^{i_5}$ of degree 3, $A(m)=m$ if and only if either $i_1=i_2=i_3=0$ or $i_4=i_5=0$. Then $F=R(x_1,x_2,x_3)+S(x_4,x_5)$ where $R$ and $S$ are homogeneous polynomial of degree 3. Since $X$ is smooth, by Jacobian test, the hypersurface defined by $R$ (resp. by $S$) in $\P^2$ (resp. $\P^1$) is a smooth cubic plane curve (resp. three distinct points). Thus, up to linear change of coordinates, $S(x_4,x_5)$ is the same as $x_4^3+x_5^3$. Since $A(F)=F$, it follows that $A(R)=R$. Then $$R=\alpha_1 x_1^2x_2+\alpha_2 x_2^2x_3+\alpha_3 x_3^2x_1,$$ where all $\alpha_i$ are nonzero complex numbers. Then we may assume all $\alpha_i=1$ and $$F= x_1^2x_2+ x_2^2x_3+ x_3^2x_1 +x_4^3+x_5^3.$$  Therefore, we have proved the following: any smooth cubic threefold admiting an order $9$ automorphism is isomorphic to the smooth cubic threefold defined by $x_1^2x_2+ x_2^2x_3+ x_3^2x_1 +x_4^3+x_5^3$. On the other hand, the Fermat cubic threefold $X_1$ admits an order $9$ automorphism. Thus, $X\cong X_1$. In fact,  let $$B=\frac{1}{\sqrt[3]{9}}
    \begin{pmatrix}
   \xi_{18}^6& \xi_{18}^6& \xi_{18}^6&0&0\\
    \xi_{18}^2& -\xi_{18}^5& \xi_{18}^8&0&0\\
    \xi_{18}^4&- \xi_{18}&- \xi_{18}^7&0&0\\
   0&0&0&{\sqrt[3]{9}}&0\\
   0&0&0&0&{\sqrt[3]{9}}
    \end{pmatrix}$$ Then $B( x_1^2x_2+ x_2^2x_3+ x_3^2x_1 +x_4^3+x_5^3)=x_1^3+x_2^3+x_3^3+x_4^3+x_5^3$. 
    
  Next suppose $G\cong C_4^2$.  Then we may assume an $F$-lifting $\tilde{G}$ of $G$ is generated by $A_1=\Diag(\xi_4,-1,1,1,1)$ and $A_2=\Diag(1,1,1,\xi_4,-1)$, and $x_1^2x_2,x_2^2x_3,x_4^2x_5\in F$ (cf. the proof of Theorem \ref{thm:sylow5}). Then $A_1(F)=A_2(F)=F$ implies $F\in (x_3)+(x_1,x_4)^2$, a contradiction to smoothness of $X$ by Proposition \ref{pp:nsmcubic} (3). Thus, $G\cong C_4^2$ is impossible. \end{proof}

\section{Sylow p-subgroups, $p=2,3$}\label{ss:Sylow23}

In this section, we classify $p$-groups in $\Aut(X)$ for $p=2,3$ (Theorems \ref{thm:sylow2}, \ref{thm:sylow3}).

Let $\mathcal{G}_2$ be the set of the following 7 groups: $C_2$, $C_4$, $C_2^2$, $C_8$, $C_4\times C_2$, $D_8$, $C_{16}$.
One can varify that a $2$-group $G$ is, as an abstract group, isomorphic to a subgroup of one of the 6 groups in Theorem \ref{thm:Main} if and only if $G\in \mathcal{G}_2$. 

\begin{theorem}\label{thm:sylow2}
Let $G\subset \Aut(X)$ be a $2$-group. Then $G\in \mathcal{G}_2$. In particular, $|G|\le 2^4$.
\end{theorem}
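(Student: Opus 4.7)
The plan is to combine three ingredients: the existence and uniqueness of the $F$-lifting $\tilde G\subset\GL(5,\C)$ (Theorem \ref{thm:Flift} together with Proposition \ref{pp:not3lift}, applicable since $\gcd(|G|,3)=1$), the classification of abelian subgroups in Theorem \ref{thm:ab}, and the non-smoothness criterion of Proposition \ref{pp:nsmcubic}. Under this setup $\tilde G$ is a fixed faithful $5$-dimensional complex representation of $G$ in which every cyclic subgroup has its eigenvalues constrained by Table \ref{tab:ab}.

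If $G$ is abelian, Theorem \ref{thm:ab} directly gives $G\in\{C_2,C_4,C_2^2,C_8,C_4\times C_2,C_{16}\}\subset\mathcal{G}_2$; the same list rules out any $G$ containing an abelian subgroup not on it, e.g.\ $C_2^3$, $C_4\times C_4$, $C_8\times C_2$, or $C_{32}$. The first non-abelian case that does not follow from this is $Q_8$, whose only proper abelian subgroups are cyclic of order $\leq 4$. Here I would use the fact that $Q_8$ has a unique faithful complex irreducible representation $V$ of dimension $2$ together with three non-trivial linear characters $\chi_1,\chi_2,\chi_3$ factoring through $Q_8/Z(Q_8)\cong C_2^2$, so any faithful $5$-dimensional lifting is of the form $V\oplus\chi_a\oplus\chi_b\oplus\chi_c$ or $V^{\oplus 2}\oplus\chi$. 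In each case I would write down the standard matrix form of $F$-liftings of the generators $i,j$ in adapted coordinates, enumerate the degree-$3$ monomials fixed by the whole lifting, and apply Proposition \ref{pp:nsmcubic}(1) to exhibit a variable $x_i$ for which no monomial $x_i^2 x_j$ lies in $F$, contradicting smoothness.

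Once $Q_8\not\subset\Aut(X)$ is established, I would bound $|G|\le 16$ and handle each order-$16$ case. Every 2-group of order $\ge 32$ contains at least one of the already-forbidden subgroups $\{C_{32},C_4\times C_4,C_8\times C_2,C_2^3,Q_8\}$, which is easy to confirm on GAP's small-group library. Among the nine non-abelian groups of order $16$, six contain $Q_8$ or $C_2^3$ and are eliminated at once, namely $Q_{16}$, $SD_{16}$, $Q_8\times C_2$, $D_8\times C_2$, $(C_4\times C_2)\rtimes C_2$, and the Pauli group $D_8\ast C_4$. The three remaining candidates $D_{16}$, $M_4(2)$, $C_4\rtimes C_4$ all have only $C_8$, $C_4\times C_2$, $C_2^2$ as their maximal abelian subgroups and are not excluded by earlier steps; for each such $G=\langle a,b\rangle$ I would start from the canonical $F$-lifting of a cyclic subgroup of order $8$ (or $4$, for $C_4\rtimes C_4$) from Table \ref{tab:ab}, impose the conjugation relation determined by the second generator, and show by an eigenspace analysis that $F$ is forced into one of the ideals ruled out by Proposition \ref{pp:nsmcubic}(2) or (3).

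The main obstacle is precisely this last family $\{D_{16},M_4(2),C_4\rtimes C_4\}$: they cannot be dismissed by consulting Theorem \ref{thm:ab} or by invoking the $Q_8$ argument, so a fresh variable-by-variable smoothness analysis on the unique faithful $5$-dimensional $F$-lifting is required in each case. Carrying this out, along with the parallel treatment of $Q_8$, is the computational heart of the proof and is where the local--global method described in the introduction is most visibly used.
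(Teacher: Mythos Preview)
Your overall architecture matches the paper's: lift $G$ to $\tilde G\subset\GL(5,\C)$, use Theorem~\ref{thm:ab} to dispose of the abelian cases, and then eliminate a short list of minimal non-abelian obstructions. The paper's list of minimal forbidden $2$-groups is $\{Q_8,\,C_2^3,\,C_4^2,\,C_4\rtimes C_4,\,C_8\times C_2,\,C_8\rtimes C_2,\,D_{16},\,C_{32}\}$, and after invoking Theorem~\ref{thm:ab} only the four non-abelian ones $Q_8$, $C_4\rtimes C_4$, $M_4(2)=C_8\rtimes C_2$, $D_{16}$ remain---the same four you isolate.

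There is, however, a genuine gap in your reduction step. You assert that every $2$-group of order $\ge 32$ contains one of $\{C_{32},\,C_4\times C_4,\,C_8\times C_2,\,C_2^3,\,Q_8\}$, and use this to bound $|G|\le 16$ \emph{before} treating $D_{16}$, $M_4(2)$, $C_4\rtimes C_4$. This is false: $D_{32}$ is a counterexample. Its three maximal subgroups are $C_{16}$ and two copies of $D_{16}$, and every smaller subgroup is cyclic, dihedral, or $C_2^2$; in particular $D_{32}$ contains none of your five ``already-forbidden'' groups. The only minimal forbidden subgroup it contains is $D_{16}$ itself. So you cannot establish $|G|\le 16$ until $D_{16}$ (and, for analogous reasons, the other order-$16$ candidates) has been excluded. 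The fix is simply to reorder: rule out all four non-abelian groups first, then observe that any $2$-group of order $32$ has an index-$2$ subgroup, which would have to lie in $\mathcal G_2\cap\{|\,\cdot\,|=16\}=\{C_{16}\}$, and check the six order-$32$ extensions of $C_{16}$ directly.

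A smaller methodological remark: for $Q_8$ and the three order-$16$ groups, you propose explicit matrix models and monomial enumeration via Proposition~\ref{pp:nsmcubic}. This works, but the paper's argument is lighter: it reads off from Table~\ref{tab:ab} the admissible traces of order-$2$ and order-$4$ elements in any $F$-lifting and compares them with the character table of $G$, obtaining a contradiction without choosing coordinates. For instance, for $Q_8$ the faithful $5$-dimensional characters are $\chi_5+\chi_i+\chi_j+\chi_k$ or $2\chi_5+\chi_i$; the second forces $\mathrm{tr}(2a)=-3$, violating row~1 of Table~\ref{tab:ab}, while in the first the requirement that $-1$ occur as an eigenvalue of every order-$4$ element (row~4) forces two of $\chi_i,\chi_j,\chi_k$ to be nontrivial, making some $\mathrm{tr}(4\,\cdot\,)\in\{-1,-3\}$, again contradicting row~4.
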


Before we start the proof of Theorem \ref{thm:sylow2}, we explain the main ideas of the proof (i.e., how to exclude all other 2-groups which are not in $\mathcal{G}_2$).

  We will exclude groups inductively (from smaller orders to larger orders). Our strategies to exclude groups consist of two steps:

{\bf Step one:} Let $G$ be a $2$-group of order $2^n$. If  Theorem \ref{thm:sylow2} has been proved for 2-groups of orders strictly less than $2^n$ and $G$ contains a proper subgroup which is not in $\mathcal{G}_2$, then the group $G$ is excluded. In this section and later sections, we call this method of excluding groups as {\it sub-test}. We frequently use GAP to do sub-test. The detailed GAP codes can be found on the second author\rq{}s personal website \cite{Yu}.

{\bf Step two:} If $G$ survives after sub-test and $G\notin \mathcal{G}_2$, then we just do case by case consideration to rule out $G$.

We now start to prove Theorem \ref{thm:sylow2}.

\begin{proof}[Proof of Theorem \ref{thm:sylow2}] By sub-test, it suffices to rule out the following 2-groups: $Q_8$, $C_2^3$, $C_4^2$, $C_4\rtimes C_4$ (GAP ID: $[16,4]$), $C_8\times C_2$, $C_8\rtimes C_2$ ($[16,6]$), $D_{16}$, $C_{32}$. Then by Theorem \ref{thm:ab}, we only need to rule out 4 groups: $Q_8$, $C_4\rtimes C_4$ ($[16,4]$), $C_8\rtimes C_2$ ($[16,6]$), and $D_{16}$. The ideas of our proof for these 4 groups $G$ are the same: using Table \ref{tab:ab} and character table of $G$, one can prove that none of 5-dimensional faithful linear representations of $G$ can be an $F$-lifting of $G$. We give detailed proof for the case $G=Q_8$, and leave the details for the other cases to the readers.

Suppose $Q_8\cong G\subset \Aut(X)$. By Theorem \ref{thm:Flift}, $G$ admits an $F$-lifting $\tilde{G}$. Since $\tilde{G}\subset \GL(5,\C)$, $\tilde{G}$ is a 5-dimensional faithful linear representation of $Q_8$. Consider the character table (Table \ref{tab:Q8}) of $Q_8$.

\begin{table}\caption{Character table of $Q_8$ }\label{tab:Q8}
\begin{center}

\begin{tabular}{|c|c|c|c|c|c|}

\hline 
 & $1a$ & $4a$ &$4b$ & $2a$ & $4c$ \\
\hline 
$\chi_1$& $1$ & $1$ & $1$ & $1$ & $1$  \\
\hline 
$\chi_2$& $1$ & $-1$ & $1$ & $1$ & $-1$  \\
\hline
$\chi_3$& $1$ & $1$ & $-1$ & $1$ & $-1$  \\
\hline
$\chi_4$& $1$ & $-1$ & $-1$ & $1$ & $1$  \\
\hline 
$\chi_5$& $2$ & $0$ & $0$ & $-2$ & $0$  \\
 \hline 

\end{tabular}

\end{center}

\end{table}

The group $Q_8$ has exactly 5 conjugacy classes the order of representative of which are $1$, $2$, $4$, $4$, $4$. In the table, we use $1a$, $2a$, $4a$, $4b$, $4c$ to denote these conjugacy classes. $\chi_1,...,\chi_5$ are the characters of the five irreducible representations of $Q_8$.

We denote by $\chi$ the character of $Q_8$ cooresponding to the representation $\tilde{G}$. Since the representation is faithful, it follows that a) $\chi=\chi_5+\chi_i +\chi_j+\chi_k$, where $i,j,k\in \{1,2,3,4\}$;  or b) $\chi=2 \chi_5+\chi_i $, where $i,j,k\in \{1,2,3,4\}$.

Case a): $\chi=\chi_5+\chi_i +\chi_j+\chi_k$. Then by classification of $F$-liftings of order $4$ elements in $\Aut(X)$ (see Table \ref{tab:ab} No.4), $-1$ must be an eigenvalue of any order $4$ element in $\tilde{G}$. Thus, by Table \ref{tab:Q8},  $\chi=\chi_5+\chi_2 +\chi_3+\chi_k$, $\chi=\chi_5+\chi_2 +\chi_4+\chi_k$, or $\chi=\chi_5+\chi_3 +\chi_4+\chi_k$ where $k\in \{1,2,3,4\}$. Then one of the three traces ${\rm tr}(4a)$, ${\rm tr}(4b)$, or ${\rm tr}(4c)$ (by abuse of notation,  we use ${\rm tr}(4a)$ etc. to denote the trace of a representative of a conjugacy class) must be $-1$ or $-3$, a contradiction to Table \ref{tab:ab} No. 4. Thus, case a) is impossible.

Case b): $\chi=2 \chi_5+\chi_i $. Then ${\rm tr}(2a)=-3$, a contradiction to Table \ref{tab:ab} No. 1. Thus, this case is impossible. 

Therefore, $Q_8$ is not a subgroup of $\Aut(X)$. \end{proof}

Next, similar to 2-groups, we classify 3-groups in $\Aut(X)$. Let $\mathcal{G}_3$ be the set of the following $12$ groups: $C_3$, $C_9$, $C_3^2$, $C_9\times C_3$, $C_3^2\rtimes C_3$ ([27,3]), $C_9\rtimes C_3$ ([27,4]), $C_{3}^3$, $C_3^3\rtimes C_3$ ([81,7]), $C_3\times (C_3^2\rtimes C_3)$ ([81,12]), $C_3\times (C_9\rtimes C_3)$ ([81,13]), $C_3^4$, $C_3\times (C_3^3\rtimes C_3)$ ([243, 51]).
Note that a $3$-group $G$ is isomorphic to a subgroup of one of the 6 groups in Theorem \ref{thm:Main} if and only if $G\in \mathcal{G}_3$. 

\begin{lemma}\label{lem:C9C34}
Let $G\subset \Aut(X)$ be a $3$-group. If $G$ contains either $C_9$ or $C_3^4$, then $G$ is isomorphic to a subgroup of the automorphism group of Fermat cubic threefold (in particular, $G\in \mathcal{G}_3$). 
\end{lemma}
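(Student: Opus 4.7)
The plan is to split into the two cases $C_9 \subseteq G$ and $C_3^4 \subseteq G$, and in each to reduce to the statement that $X$ is projectively equivalent to the Fermat cubic threefold $X_1$. Once this reduction is made, $G \subseteq \Aut(X) \cong \Aut(X_1) = G_{X_1}$ is immediate, so the first assertion of the lemma follows. For the parenthetical conclusion $G \in \mathcal{G}_3$, I will invoke that any $3$-subgroup of $G_{X_1} = C_3^4 \rtimes S_5$ sits inside a Sylow $3$-subgroup of order $3^5 = 243$, and finish by a GAP enumeration confirming that every $3$-subgroup of $G_{X_1}$ belongs to $\mathcal{G}_3$.

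The case $C_9 \subseteq G$ needs no new work: the proof of Theorem \ref{thm:ab} for $G \cong C_9$ already establishes that every smooth cubic threefold admitting an order-$9$ automorphism is projectively equivalent to $X_1$, so I will simply cite it.

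For $C_3^4 \subseteq G$, my approach is as follows. By Theorem \ref{thm:Flift}, the $C_3^4$ subgroup admits an $F$-lifting $\widetilde{H} \subset \GL(5,\C)$. Since $\widetilde{H}$ is abelian of exponent $3$, after simultaneous diagonalization I may regard it as an index-$3$ subgroup of $\mu_3^5 \subset \GL(5,\C)$ (the diagonal matrices with cube-root-of-unity entries), hence cut out by a single equation $\xi_1^{c_1}\cdots\xi_5^{c_5}=1$ for some nonzero $(c_1,\dots,c_5) \in (\Z/3\Z)^5$. The requirement that $\widetilde{H}$ map isomorphically onto $C_3^4$ in $\PGL(5,\C)$, i.e.\ that $\widetilde{H}\cap\C^*\cdot I_5 = \{I_5\}$, will translate into the single numerical constraint $\sum_i c_i \not\equiv 0 \pmod 3$.

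The computational core will be the following monomial calculation. A cubic monomial $x_1^{d_1}\cdots x_5^{d_5}$ is $\widetilde{H}$-invariant if and only if $(d_1,\dots,d_5) \equiv t(c_1,\dots,c_5) \pmod 3$ for some $t\in \Z/3\Z$. Applying the bound $\sum_i d_i \ge \sum_i(tc_i \bmod 3)$ together with $\sum_i d_i = 3$ will show that for $t\ne 0$ a solution forces $\sum_i c_i \equiv 0 \pmod 3$, contradicting the no-scalar condition; hence only $t=0$ contributes, yielding exactly the five monomials $x_1^3,\dots,x_5^3$. Therefore $F = \sum_{i=1}^5 a_i x_i^3$, and Lemma \ref{lem:nsm} (each variable must appear in $F$) forces every $a_i \ne 0$; a diagonal rescaling then brings $F$ to the Fermat form, so $X \cong X_1$. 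The main technical point will be setting up this linear-algebraic dictionary cleanly so that both the no-scalar condition and the invariant-monomial calculation become transparent; once this is in place, the remaining step is a short divisibility check over $\Z/3\Z$.
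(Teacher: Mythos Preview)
Your proposal is correct. For the $C_9$ case you and the paper do the same thing: cite the analysis in the proof of Theorem~\ref{thm:ab}. For the $C_3^4$ case the paper simply points to Table~\ref{tab:ab}, entry No.~33 (whose derivation is part of the largely-omitted case work in the proof of Theorem~\ref{thm:ab}), whereas you supply a self-contained linear-algebra argument identifying $\widetilde{H}$ with a hyperplane in $(\Z/3\Z)^5$ and computing the invariant cubic monomials directly. Your route is more transparent and explains \emph{why} $C_3^4$-invariance forces $F$ to be a diagonal sum of cubes, rather than deferring to a table lookup; the paper's route is shorter because the table has already absorbed this computation.

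One small simplification to your plan: the bound $\sum_i d_i \ge \sum_i (tc_i \bmod 3)$ is not the cleanest way to reach the contradiction for $t\ne 0$. It is more direct to observe that $3=\sum_i d_i \equiv t\sum_i c_i \pmod 3$, so $t\sum_i c_i\equiv 0$, and since $t\ne 0$ in $\Z/3\Z$ this forces $\sum_i c_i\equiv 0$, contradicting your no-scalar condition. This replaces the inequality step by a one-line congruence.
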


\begin{proof}
It is well known that the automorphism group of Fermat cubic threefold $X_1$ is isomorphic to $C_3^4\rtimes S_5$ (see Section \ref{ss:6examples} for explicit description of generators of $\Aut(X_1)$). On the other hand, if $G$ contains either $C_9$ or $C_3^4$, then, by Table \ref{tab:ab} (see also the proof of Theorem \ref{thm:ab}), $X$ is isomorphic to $X_1$ and hence $G$ is isomorphic to a subgroup of $\Aut(X_1)$. \end{proof}

\begin{theorem}\label{thm:sylow3}
Let $G\subset \Aut(X)$ be a $3$-group. Then $G\in \mathcal{G}_3$. In particular, $|G|\le 3^5$.
\end{theorem}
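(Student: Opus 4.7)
The plan is to mimic the inductive strategy used in the proof of Theorem~\ref{thm:sylow2}. The main tools already in place are Proposition~\ref{pp:3plift} (so the exponent of $G$ divides $9$), Theorem~\ref{thm:Flift} (existence of an $F$-lifting $\tilde G \subset \GL(5,\C)$), Lemma~\ref{lem:C9C34} (any $G$ containing $C_9$ or $C_3^4$ embeds into $\Aut(X_1) \cong C_3^4 \rtimes S_5$), and Theorem~\ref{thm:ab} together with Table~\ref{tab:ab}, which records the explicit diagonal form of the $F$-lifting of each abelian 3-subgroup and hence a very short list of admissible eigenvalue multisets for any order-$3$ or order-$9$ element appearing in an $F$-lifting.

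First I would run the sub-test in GAP on all 3-groups of order at most $3^6$ against the list $\mathcal{G}_3$: a 3-group $G \notin \mathcal{G}_3$ is immediately excluded as soon as some proper subgroup lies outside $\mathcal{G}_3$, so only the minimal non-$\mathcal{G}_3$ candidates require further work. Proposition~\ref{pp:3plift} kills every group containing an element of order $27$. Every survivor that contains $C_9$ or $C_3^4$ is handled at one stroke by Lemma~\ref{lem:C9C34}: such a $G$ lies inside a Sylow $3$-subgroup of $\Aut(X_1)$, which one computes in GAP to have order exactly $3^5$ and to belong to $\mathcal{G}_3$, yielding the bound $|G| \le 3^5$ in that case.

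What remains is a short list of minimal non-$\mathcal{G}_3$ candidates whose exponent divides $9$ and which contain neither $C_9$ nor $C_3^4$ as a subgroup. For each such $G$, I would repeat the character-theoretic argument used in the $Q_8$ case of Theorem~\ref{thm:sylow2}: regarding the $F$-lifting $\tilde G$ as a faithful 5-dimensional complex representation of $G$, I would use GAP to enumerate the character table and the finitely many faithful 5-dimensional characters $\chi$, and then test each $\chi$ against Table~\ref{tab:ab}. For every conjugacy class represented by an element $g$ of order $3$ or $9$, the restriction of $\tilde G$ to $\langle g\rangle$ must be conjugate in $\GL(5,\C)$ to one of the diagonal patterns in the table, so $\chi(g)$ is forced to lie in a short explicit list of cyclotomic values; a single class on which no such value is attained eliminates $\chi$, and if no faithful 5-dimensional $\chi$ survives then $G$ is ruled out.

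The main obstacle I expect is the computational bookkeeping, since order $3^5 = 243$ alone has $67$ isomorphism classes of groups. However, the combination of sub-test, Proposition~\ref{pp:3plift}, and Lemma~\ref{lem:C9C34} should collapse most of these, and the Table~\ref{tab:ab} constraint on order-$3$ traces is restrictive enough that each remaining candidate falls out by the same short character-table calculation that handled $Q_8$. The upper bound $|G| \le 3^5$ is then automatic: any 3-group of order $\ge 3^6$ would contain, as a proper subgroup of index $3$, a 3-group of order $3^5$ that is either already excluded or equal to the Sylow $3$-subgroup of $\Aut(X_1)$, and GAP verifies that no 3-group of order $3^6$ extends the latter while itself lying in $\mathcal{G}_3$.
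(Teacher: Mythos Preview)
Your proposal is correct and follows essentially the same route as the paper: sub-test to isolate the minimal non-$\mathcal{G}_3$ candidates, then kill those containing $C_9$ or $C_3^4$ via Lemma~\ref{lem:C9C34}, and handle the remainder by analysing faithful $5$-dimensional representations. The only noteworthy difference is in the endgame: after sub-test and Lemma~\ref{lem:C9C34}, exactly one survivor remains, namely $(C_3\times(C_3^2\rtimes C_3))\rtimes C_3$ with GAP ID $[243,65]$, and the paper dispatches it with the single observation that this group has \emph{no} faithful $5$-dimensional complex representation whatsoever, so Theorem~\ref{thm:Flift} already excludes it---your trace test against Table~\ref{tab:ab} would of course reach the same conclusion, but vacuously and with more bookkeeping. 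Your final paragraph on the bound $|G|\le 3^5$ is also more elaborate than needed: once every $3$-group in $\Aut(X)$ of order $\le 3^5$ is known to lie in $\mathcal{G}_3$, any $G$ of order $\ge 3^6$ contains the unique order-$3^5$ member $[243,51]$ of $\mathcal{G}_3$, hence contains $C_3^4$, and Lemma~\ref{lem:C9C34} forces $G\hookrightarrow\Aut(X_1)$, contradicting $|G|>3^5$.
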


\begin{proof}
As in the proof of Theorem \ref{thm:sylow2}, by sub-test, it sufffices to rule out the following $14$ 3-groups: $C_{27}$, $C_9^2$, $(C_9\times C_3)\rtimes C_3$ ([81,3]), $C_9\rtimes C_9$ ([81,4]), $(C_9\times C_3)\rtimes C_3$ ([81,8]),  $(C_9\times C_3)\rtimes C_3$ ([81,9]), $C_3.(C_3^2\rtimes C_3)$ ([81,10]), $C_9\times C_3^2$,  $(C_9\times C_3)\rtimes C_3$ ([81,14]), $C_3^4\rtimes C_3$ ([243,37]), $(C_3\times (C_9\rtimes C_3))\rtimes C_3$ ([243,56]), $C_3^2\times (C_3^2\rtimes C_3)$ ([243,62]), $(C_3\times (C_3^2\rtimes C_3))\rtimes C_3$ ([243,65]), $C_3^5$. By Theorem \ref{thm:ab}, now we only need to ruling out the 10 non-abelian 3-groups in the list above.  It turns out that all the 10 non-abelian 3-groups $G$ except $(C_3\times (C_3^2\rtimes C_3))\rtimes C_3$ ([243,65]) satisfy both of the following two properties: i) $G$ contains either $C_9$ or $C_3^4$, ii) $G$ is not isomorphic to any subgroup of $\Aut(X_1)$. Thus, by Lemma \ref{lem:C9C34}, we are reduced to rule out $(C_3\times (C_3^2\rtimes C_3))\rtimes C_3$ ([243,65]).  It turns out the group $(C_3\times (C_3^2\rtimes C_3))\rtimes C_3$ ([243,65]) has no 5-dimensional faithful linear representation. Thus, by Theorem \ref{thm:Flift}, it cannot be a subgroup of $\Aut(X)$ (since otherwise, its $F$-lifting would be a  5-dimensional faithful linear representation of it). \end{proof}

\begin{proposition}\label{pp:3^5}
Let $G\subset \Aut(X)$ be a subgroup of order $3^5 k$ for some positive integer $k$. Then $G$ is isomorphic to a subgroup of $\Aut(X_1)$.
\end{proposition}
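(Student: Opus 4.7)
The plan is to reduce everything to Lemma \ref{lem:C9C34} via Sylow analysis. First I would extract a Sylow $3$-subgroup $G_3 < G$. Since $|G|=3^5k$, we have $|G_3|\geq 3^5$, and by Theorem \ref{thm:sylow3} we also have $|G_3|\leq 3^5$, so in fact $|G_3|=3^5$.

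Next, I would use Theorem \ref{thm:sylow3} to identify the isomorphism class of $G_3$: it must lie in $\mathcal{G}_3$, but inspection of the list of groups in $\mathcal{G}_3$ shows that $C_3\times(C_3^3\rtimes C_3)$ (GAP ID $[243,51]$) is the only group of order $3^5$ there. Hence $G_3\cong C_3\times(C_3^3\rtimes C_3)$. The key structural observation is that this group contains $C_3\times C_3^3 = C_3^4$ as a subgroup (taking the $C_3$ factor together with the normal $C_3^3$ inside $C_3^3\rtimes C_3$).

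Consequently $\Aut(X)$ contains a subgroup isomorphic to $C_3^4$. Now I would invoke Lemma \ref{lem:C9C34} applied to this $C_3^4\subset \Aut(X)$: the proof of that lemma (together with the $C_9$ analysis inside Theorem \ref{thm:ab}) shows that any smooth cubic threefold whose automorphism group contains $C_3^4$ is projectively isomorphic to the Fermat cubic threefold $X_1$. Therefore $X\cong X_1$, so $\Aut(X)\cong \Aut(X_1)$, and $G$ is isomorphic to a subgroup of $\Aut(X_1)$ as required.

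There is no real obstacle here; the proof is a short assembly of already-established facts. The only delicate point to state cleanly is that the group $C_3\times(C_3^3\rtimes C_3)$ genuinely contains $C_3^4$ (which I would justify by exhibiting the $C_3$-factor together with the elementary abelian normal subgroup of the extraspecial-type factor), and that Lemma \ref{lem:C9C34} can be applied to a $3$-subgroup of $\Aut(X)$ even though $G$ itself need not be a $3$-group.
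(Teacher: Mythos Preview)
Your proposal is correct and follows essentially the same route as the paper: identify the Sylow $3$-subgroup as $C_3\times(C_3^3\rtimes C_3)$ via Theorem~\ref{thm:sylow3}, observe it contains $C_3^4$, and deduce $X\cong X_1$. The only cosmetic difference is that the paper cites Table~\ref{tab:ab} No.~33 directly for the step $C_3^4\subset\Aut(X)\Rightarrow X\cong X_1$, whereas you route this through (the proof of) Lemma~\ref{lem:C9C34}; since that lemma's proof is precisely an appeal to Table~\ref{tab:ab}, the arguments are the same.
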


\begin{proof}
By Theorem \ref{thm:sylow3}, a Sylow $3$-subgroup $G_3$ of $G$ must be isomorphic to $C_3\times (C_3^3\rtimes C_3)$. Then $C_3^4$ is a subgroup of $G$. Then by Table \ref{tab:ab} No. 33, $X\cong X_1$. Thus, $G$ is isomorphic to a subgroup of $\Aut(X_1)$. This completes the proof of the proposition. \end{proof}

\section{Solvable subgroups of order $2^a3^b5^c$}\label{ss:sol}

\begin{theorem}\label{thm:sol2000}
Let $G\subset \Aut(X)$ be a solvable group of order $2^a 3^b 5^c\le 2000$, where $a,b,c\ge 0$. Then, as an abstract group, $G$ is isomorphic to a subgroup of $\Aut(X_i)$ for some $i\in \{ 1,...,6\}$.
\end{theorem}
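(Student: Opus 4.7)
The plan is to reduce the proof of Theorem \ref{thm:sol2000} to a finite case analysis using the Sylow structure results already established. First I would fix a solvable $G \subset \Aut(X)$ of order $2^a 3^b 5^c \le 2000$ and extract the Sylow constraints: Theorem \ref{thm:sylow5} gives $c \le 1$; Theorem \ref{thm:sylow2} gives $a \le 4$ (and forces a Sylow $2$-subgroup into $\mathcal{G}_2$); and Theorem \ref{thm:sylow3} gives $b \le 5$ (and forces a Sylow $3$-subgroup into $\mathcal{G}_3$). The case $b = 5$ is already handled by Proposition \ref{pp:3^5}, which shows $G$ embeds in $\Aut(X_1)$, so I may assume $a \le 4$, $b \le 4$, $c \le 1$. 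This leaves only finitely many possible orders, and for each of them one can invoke the GAP \texttt{SmallGroups} library to enumerate all solvable groups.

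Next I would run the sub-test from the proof of Theorem \ref{thm:sylow2} to discard the vast majority of candidates. A candidate $G$ is immediately excluded if it contains a subgroup lying outside the union of $\mathcal{G}_2$, $\mathcal{G}_3$, the $25$ abelian groups of Theorem \ref{thm:ab}, and the list of groups already shown (inductively on order) to embed in some $\Aut(X_i)$. In practice this single filter eliminates almost everything: the only survivors are those $G$ whose Sylow $2$-subgroup, Sylow $3$-subgroup, and Sylow $5$-subgroup are individually permissible and which contain no forbidden subgroup of strictly smaller order.

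For each surviving candidate I would mimic the character-theoretic argument used for $Q_8$ in the proof of Theorem \ref{thm:sylow2}: by Theorem \ref{thm:Flift} the group $G$ would have to admit a $5$-dimensional faithful linear representation $\tilde{G} \subset \GL(5,\C)$ whose restriction to every cyclic subgroup yields one of the $F$-liftings recorded in Table \ref{tab:ab}. Using the GAP character table of $G$ I would decompose every faithful $5$-dimensional character as a sum of irreducibles, compute its trace on a representative of each conjugacy class, and test whether those traces are compatible with the allowed eigenvalue patterns of Table \ref{tab:ab}. A candidate failing this compatibility test on any conjugacy class is excluded; what remains is precisely the list of groups that are isomorphic to subgroups of some $G_{X_i}$, and for each one I would display an explicit embedding using the generators of $G_{X_i}$ in Example \ref{mainex}.

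The main obstacle I anticipate is the bookkeeping: even after the Sylow bounds and sub-test, several orders (those of the form $2^a 3^b$ with $a,b$ near their maxima, and the mixed orders $2^a 3^b \cdot 5$) will produce a handful of candidates that require individual attention, and the character-table elimination can be delicate when $G$ has many $5$-dimensional faithful representations whose irreducible constituents are of small dimension. A secondary difficulty is that Table \ref{tab:ab} only constrains liftings of abelian subgroups, so for a nonabelian candidate one must carefully piece together eigenvalue information from several maximal abelian subgroups simultaneously, and occasionally invoke Proposition \ref{pp:nsmcubic} directly against an explicit candidate invariant polynomial to force the contradiction.
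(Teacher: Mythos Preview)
Your proposal is correct and follows essentially the same route as the paper: bound the Sylow exponents via Theorems \ref{thm:sylow2}, \ref{thm:sylow3}, \ref{thm:sylow5}, dispose of the $3^5\mid |G|$ case by Proposition \ref{pp:3^5}, run the sub-test (with GAP) on the remaining finitely many orders, and eliminate the surviving candidates by matching their faithful $5$-dimensional characters against the eigenvalue constraints of Table \ref{tab:ab}. The paper carries this out explicitly, arriving at a list of $24$ non-abelian groups to be excluded by the character-table argument, but the overall strategy is identical to yours.
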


\begin{proof}
By Theorems \ref{thm:sylow2}, \ref{thm:sylow3}, \ref{thm:sylow5}, $a\le 4$, $b\le 5$, $c\le 1$. Moreover, we may assume at least two of $a,b,c$ are not zero. Then, by $|G|=2^a 3^b 5^c\le 2000$, $|G|$ has the following 41 possibilities: 6, 10, 12, 15, 18, 20, 24, 30, 36, 40, 45, 48, 54, 60, 72, 80, 90, 108, 120, 135, 144, 162, 180, 216, 240, 270, 324, 360, 405, 432, 486, 540, 648, 720, 810, 972, 1080, 1215, 1296, 1620, 1944. When $|G|\in \{486, 972, 1215, 1944\}$, Theorem \ref{thm:sol2000} is just a consequence of Proposition \ref{pp:3^5}. Thus, we only need to consider the remaining 37 possibilities for $|G|$. By sub-test and Theorem \ref{thm:ab}, we are reduced to ruling out the following 24 non-abelian groups: $D_{18}$, $C_3\rtimes C_8$, $D_{24}$, $C_2\times (C_3\rtimes C_4)$, $D_{30}$, $C_9\rtimes C_4$ ([36,1]), $C_2^2\rtimes C_9$ ([36,3]), $C_3^2\rtimes C_4$ ([36,7]), $C_2\times (C_3^2\rtimes C_2)$ ([36,13]), $C_3\rtimes C_{16}$, $(C_3^2\rtimes C_3)\rtimes C_2$ ([54,5]), $C_3^3\rtimes C_2$ ([54,14]), $C_{15}\rtimes C_4$ ([60, 7]), $C_3^2\times D_8$, $C_3^2\rtimes C_8$ ([72,39]), $(C_3\times A_4)\rtimes C_2$ ([72,43]), $(C_3^2\rtimes C_3)\rtimes C_4$ ([108,11]), $C_4\times (C_3^2\rtimes C_3)$ ([108,13]), $C_6^2\rtimes C_3$ ([108,22]), $C_2^2\times (C_3^2\rtimes C_3)$ ([108,30]), $C_3^2\times (C_3\rtimes C_4)$, $C_3^2\times A_4$, $C_6\times (C_3^2\rtimes C_3)$ ([162, 48]), $C_3^2\times (C_3^2\rtimes C_4)$ ([324, 161]). Then by considering character tables of these groups and by using Table \ref{tab:ab}, one can rule out these 24 non-abelian groups (see the proof of Theorems \ref{thm:ab}, \ref{thm:sylow2} and \ref{thm:sylow3}), and we leave the details to the readers. \end{proof}

In the proof of Theorem \ref{thm:sol>2000}, we will need the following known result in group theory:

\begin{theorem}[{See, for example, \cite[Chapter 4,~Theorem 5.6]{Su86}}]\label{thm:solsub} Let $G$ be a finite solvable group. We can write $$|G|=mn\;\;\;\ (m,n)=1.$$

Then, the following propositions hold.

(i) There are subgroups of order $m$.

(ii) Any two subgroups of order $m$  are conjugate.

(iii) Any subgroup whose order divides $m$ is contained in a subgroup of order $m$.
\end{theorem}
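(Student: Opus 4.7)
The plan is to prove all three parts by simultaneous induction on $|G|$, following the classical argument of P. Hall. The base case $|G|=1$ is vacuous; for the inductive step, I would choose a minimal normal subgroup $N\triangleleft G$, which by solvability of $G$ is an elementary abelian $p$-group of some order $p^s$. Since $(m,n)=1$, the prime $p$ divides exactly one of $m$ and $n$, and this dichotomy organizes the entire argument.

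For existence (i), if $p\mid n$ then $\bar G:=G/N$ has order $m\cdot(n/p^s)$, so by induction it contains a subgroup $\bar H$ of order $m$; its preimage $H<G$ has order $mp^s$ and contains $N$ as a \emph{normal} Hall subgroup (since $(|N|,[H:N])=(p^s,m)=1$), so the Schur--Zassenhaus theorem produces a complement of $N$ in $H$ of order $m$. If instead $p\mid m$, then $\bar G$ has order $(m/p^s)\cdot n$ and by induction contains a subgroup of order $m/p^s$, whose preimage in $G$ has order $m$. For conjugacy (ii), given two Hall $m$-subgroups $H_1,H_2$, I would pass to $\bar G$, where $\bar H_1,\bar H_2$ are Hall subgroups of the appropriate order (the reduction uses that $H_i\cap N$ has order coprime to $|H_i|$ or equals $N$ depending on the case), apply the inductive conjugacy statement to reduce to $H_1 N = H_2 N$, and then conjugate $H_1$ to $H_2$ inside this common subgroup using the solvable form of Schur--Zassenhaus. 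For (iii), given any $K<G$ with $|K|\mid m$, I would look at $KN/N<\bar G$, embed it in a Hall subgroup of $\bar G$ of the appropriate order by induction, pull back, and again use Schur--Zassenhaus plus (ii) inside the pullback to produce a Hall $m$-subgroup of $G$ containing $K$.

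The main obstacle is the careful bookkeeping between the two cases and matching the inductive hypothesis with the right invocation of Schur--Zassenhaus: Case $p\mid n$ must create a complement from scratch while Case $p\mid m$ merely lifts a subgroup through the quotient by $N$. The conjugacy and embedding statements (ii) and (iii) crucially use that $G$ is solvable, since the general Schur--Zassenhaus conjugacy theorem requires solvability of either the normal subgroup or its complement — both of which are automatic here. Once these inductive steps are assembled, the three statements follow simultaneously without any further case analysis.
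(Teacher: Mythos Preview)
Your sketch is the standard Hall argument and is essentially correct. However, the paper does not prove this theorem at all: it is quoted as a known result from the literature (with a reference to Suzuki's textbook) and is only invoked as a tool in the proof of Theorem~\ref{thm:sol>2000}. So there is no ``paper's own proof'' to compare against; your inductive Schur--Zassenhaus outline simply supplies the omitted textbook argument.
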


\begin{theorem}\label{thm:sol>2000}
Let $G\subset \Aut(X)$ be a solvable group of order $2^a 3^b 5^c> 2000$, where $a,b,c\ge 0$. Then, as an abstract group, $G$ is isomorphic to a subgroup of $\Aut(X_i)$ for some $i\in \{ 1,...,6\}$.
\end{theorem}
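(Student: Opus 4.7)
The plan is to combine the Sylow-type bounds already proved with Hall's theorem for solvable groups (Theorem \ref{thm:solsub}) in order to reduce the case $|G|>2000$ to the already-settled case $|G|\le 2000$ (Theorem \ref{thm:sol2000}), while Proposition \ref{pp:3^5} takes care of the situation $3^5\mid |G|$. First, by Theorems \ref{thm:sylow2}, \ref{thm:sylow3} and \ref{thm:sylow5} we have $a\le 4$, $b\le 5$, $c\le 1$. If $b=5$ then Proposition \ref{pp:3^5} directly yields $G\subset\Aut(X_1)$, and we are done. Otherwise $b\le 4$, and a direct enumeration under $a\le 4$, $b\le 4$, $c\le 1$, $|G|>2000$ leaves exactly three possible orders: $|G|\in\{2160,3240,6480\}$, corresponding to $(a,b,c)=(4,3,1),(3,4,1),(4,4,1)$ respectively.

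Next I would eliminate the two cases $|G|=2160$ and $|G|=6480$ (both having Sylow $2$-part $2^4$) uniformly. Since $G$ is solvable, Theorem \ref{thm:solsub} yields a Hall $\{2,3\}$-subgroup $H\subset G$ of order $2^a 3^b$, equal to $432$ or $1296$ respectively. Both orders are at most $2000$, so Theorem \ref{thm:sol2000} forces $H$, as an abstract group, to embed in some $\Aut(X_i)$. But against the list $|\Aut(X_1)|=9720=2^3\cdot 3^5\cdot 5$, $|\Aut(X_2)|=648$, $|\Aut(X_3)|=24$, $|\Aut(X_4)|=16$, $|\Aut(X_5)|=660$, $|\Aut(X_6)|=360$, one checks at once that neither $432=2^4\cdot 3^3$ nor $1296=2^4\cdot 3^4$ divides any $|\Aut(X_i)|$: the obstruction in every case is the Sylow $2$-part $2^4$, which is not present in any of the six targets. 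Hence both orders are impossible.

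The remaining case $|G|=3240=2^3\cdot 3^4\cdot 5$ is where the substance lies. Pick a Hall $\{3,5\}$-subgroup $H\subset G$ of order $405=3^4\cdot 5$; by Theorem \ref{thm:sol2000}, $H$ embeds abstractly in some $\Aut(X_i)$, and the only one whose order is divisible by $405$ is $\Aut(X_1)=C_3^4\rtimes S_5$. The pivotal structural claim is then: every subgroup of $\Aut(X_1)$ of order $405$ contains the normal subgroup $N=C_3^4$. I would prove this by setting $|H\cap N|=3^k$ and observing that $H/(H\cap N)$ embeds into $\Aut(X_1)/N\cong S_5$ with order $3^{4-k}\cdot 5$; divisibility inside $|S_5|=120$ forces $k\ge 3$, while $k=3$ would require a subgroup of $S_5$ of order $15$, which does not exist because any group of order $15$ is cyclic and $S_5$ has no element of order $15$ (the maximal element order in $S_5$ being $6$). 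Therefore $k=4$ and $C_3^4\subset H\subset G\subset\Aut(X)$. By Table \ref{tab:ab} (compare Lemma \ref{lem:C9C34}) this forces $X\cong X_1$, and hence $G\subset\Aut(X)=\Aut(X_1)$.

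The main obstacle is precisely this last case $|G|=3240$: the other two orders die from a pure numerical divisibility check, but here the Hall $\{3,5\}$-subgroup genuinely does embed abstractly into $\Aut(X_1)$, so one must exploit specific structural information about $\Aut(X_1)=C_3^4\rtimes S_5$ (namely that $S_5$ contains no subgroup of order $15$) to force $H$ to sweep out the full $C_3^4$ and thereby activate the rigidity furnished by Table \ref{tab:ab}.
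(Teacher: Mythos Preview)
Your argument is correct, but it diverges from the paper's in the choice of Hall subgroups, and the paper's route is considerably shorter. After reducing to $|G|\in\{2160,3240,6480\}$ (which you do correctly), the paper handles all three cases \emph{uniformly} by taking a Hall $\{2,5\}$-subgroup, of order $80$, $40$, $80$ respectively, and observing via Theorem~\ref{thm:sol2000} that no $\Aut(X_i)$ contains a subgroup of order $40$ or $80$ (for $\Aut(X_1)=C_3^4\rtimes S_5$ and $\Aut(X_6)=C_3\times S_5$ this boils down to the fact that $S_5$ has no subgroup of index $3$). So all three orders are simply impossible. Your approach instead splits into two sub-cases: Hall $\{2,3\}$-subgroups dispose of $2160$ and $6480$ by divisibility, while for $3240$ you take a Hall $\{3,5\}$-subgroup of order $405$, argue it must contain $C_3^4$, and invoke the rigidity $X\cong X_1$ from Table~\ref{tab:ab}. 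This works, and in fact yields the extra information that the $3240$ case forces $X\cong X_1$ (whereas the paper's argument shows directly that no such $G$ exists at all), but the $S_5$ computation you need is essentially the same one that makes the paper's uniform approach go through, just deployed less efficiently.

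One small inaccuracy in your write-up: you say ``the obstruction in every case is the Sylow $2$-part $2^4$, which is not present in any of the six targets,'' but $|\Aut(X_4)|=16=2^4$. Your divisibility conclusion is still correct (since $432$ and $1296$ exceed $16$, the obstruction for $X_4$ is the $3$-part), but the stated reason does not cover that target.
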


\begin{proof}
By Theorems \ref{thm:sylow2}, \ref{thm:sylow3}, \ref{thm:sylow5}, $a\le 4$, $b\le 5$, $c\le 1$. Moreover, we may assume at least two of $a,b,c$ are not zero. Then, by $|G|=2^a 3^b 5^c> 2000$, $|G|$ has the following 8 possibilities: 2160, 2430, 3240, 3888, 4860, 6480, 9720, 19440.

When $|G|\in \{2430, 3888, 4860, 9720, 19440\}$, Theorem \ref{thm:sol>2000} is just a consequence of Proposition \ref{pp:3^5}. Thus, we only need to consider the remaining 3 possibilities for $|G|$: $2160=2^4 3^3 5$, $3240=2^3 3^4 5$, $6480=2^4 3^4 5$. If $|G|\in \{2160,3240,6480\}$, then, by Theorem \ref{thm:solsub}, $G$ contains a subgroup of order $2^3 5$ or $2^4 5$, a contradiction to Theorem \ref{thm:sol2000} (note that, for any $1\le i\le 6$, $\Aut(X_i)$ contains no subgroup of order $2^3 5$ or $2^4 5$). \end{proof}

\section{Non-solvable subgroups of order $2^a3^b 5^c$}\label{ss:nonsol}

\begin{theorem}\label{thm:nsol2000}
Let $G\subset \Aut(X)$ be a non-solvable group of order $2^a 3^b 5^c\le 2000$, where $a,b,c\ge 0$. Then, as an abstract group, $G$ is isomorphic to a subgroup of $\Aut(X_i)$ for some $i\in \{ 1,...,6\}$.
\end{theorem}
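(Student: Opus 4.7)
The approach parallels the proof of Theorem \ref{thm:sol2000}. First, the plan is to pin down the possible orders of $G$. Burnside's $p^aq^b$-theorem forces any non-solvable group to have order divisible by at least three distinct primes, so combined with $|G|=2^a3^b5^c$ and the Sylow bounds $a\le 4$, $b\le 5$, $c\le 1$ from Theorems \ref{thm:sylow2}, \ref{thm:sylow3}, \ref{thm:sylow5}, we must have $c=1$ and $a,b\ge 1$. Any non-abelian composition factor $S$ of $G$ is itself non-solvable with $|S|$ of the form $2^{a'}3^{b'}5^{c'}$ and $|S|\le 2000$; inspection of the classification of finite simple groups leaves only $S\cong A_5$ (order $60$) and $S\cong A_6$ (order $360$). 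Under the hypothesis $|G|\le 2000$ the possible orders are therefore
\[
|G|\in\{60,\ 120,\ 180,\ 240,\ 360,\ 540,\ 720,\ 1080,\ 1620\}.
\]

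Second, I would rule out every $G$ containing $A_6$. The normalizer in $A_6$ of a Sylow $3$-subgroup has order $36$ and is isomorphic to $C_3^2\rtimes C_4$ with $C_4$ acting faithfully on $C_3^2$, i.e.\ the group ([36,7]) in GAP notation, which was explicitly excluded in the proof of Theorem \ref{thm:sol2000}. Hence no subgroup of $\Aut(X)$ contains $A_6$; this kills $A_6$ itself (order $360$), the order-$720$ extensions $S_6$, $\PGL(2,9)$, $M_{10}$ and $A_6\times C_2$, and $A_6\times C_3$ (order $1080$). Consequently the only remaining non-abelian composition factor of $G$ is $A_5$.

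Third, for each of the remaining admissible orders I would use GAP to enumerate all non-solvable groups whose only non-abelian composition factor is $A_5$, and then apply the sub-test using Theorems \ref{thm:sylow2}, \ref{thm:sylow3}, \ref{thm:ab}, \ref{thm:sol2000}. For example, the binary icosahedral group $\SL(2,5)$ and its products and extensions are discarded because their Sylow $2$-subgroup is $Q_8\notin\mathcal{G}_2$; more generally, any group containing one of the $24$ non-abelian solvable groups ruled out in Theorem \ref{thm:sol2000} is discarded by the same token. For each surviving candidate $G$ I would then exhibit an explicit embedding into one of $\Aut(X_1)=C_3^4\rtimes S_5$, $\Aut(X_5)=\PSL(2,11)$, or $\Aut(X_6)=S_5\times C_3$ using the known structure of these groups; for instance, $A_5$ embeds into all three, $S_5$ embeds into $\Aut(X_1)$ and $\Aut(X_6)$, and products or semidirect products of $A_5$ or $S_5$ with elementary abelian $3$-groups embed into $\Aut(X_1)$ via the natural $S_5$-action on $C_3^4$.

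The hardest step will be the handful of groups which pass the sub-test but are not evidently subgroups of any $\Aut(X_i)$, notably $A_5\times C_2$ and $A_5\times C_3^k$ for $k\ge 2$, as well as certain twisted extensions of $S_5$ by elementary abelian $3$-groups. I would rule these out in parallel with the $Q_8$ argument in the proof of Theorem \ref{thm:sylow2}: by Theorem \ref{thm:Flift}, any such $G\subset\Aut(X)$ yields an $F$-lifting $\widetilde G\subset\GL(5,\C)$ which is a $5$-dimensional faithful representation of $G$. Decomposing $\widetilde G$ into $A_5$-isotypic components pins down the commutant of $A_5$ inside $\GL(5,\C)$ and hence the admissible actions of the complementary $C_3^k$ or $C_2$; a direct analysis of the resulting $\widetilde G$-invariant cubics, combined with Proposition \ref{pp:nsmcubic}, then forces the defining polynomial $F$ to be singular, yielding the desired contradiction.
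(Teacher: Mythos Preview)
Your plan is broadly workable but misses a simplification the paper exploits and misidentifies which cases are actually hard.

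First, the paper sharpens the order list further than you do: since the only group of order $16$ in $\mathcal{G}_2$ is the cyclic group $C_{16}$, and a finite group with cyclic Sylow $2$-subgroup is solvable, one must have $a\in\{2,3\}$ (not merely $a\ge 1$). This immediately removes $240$ and $720$ from your list, so no order-$720$ extensions of $A_6$ need separate treatment. With this restriction (and Proposition~\ref{pp:3^5} handling $b=5$), the paper's list of orders is exactly $60,120,180,360,540,1080,1620$.

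Second, and more importantly, the groups you flag as the ``hardest step'' are in fact immediately killed by sub-test, not by a delicate representation-theoretic analysis. For instance $A_5\times C_2$ has Sylow $2$-subgroup $C_2^3\notin\mathcal{G}_2$, so it dies by Theorem~\ref{thm:sylow2}; and $A_5\times C_3^k$ for $k\ge 2$ contains the abelian group $C_5\times C_3^2\cong C_{15}\times C_3$, which is not among the $25$ groups of Theorem~\ref{thm:ab}. So the elaborate argument you sketch in your final paragraph is unnecessary. When the sub-test is run systematically (as the paper does), the \emph{only} non-solvable groups of the relevant orders that survive and are not already subgroups of some $\Aut(X_i)$ are $A_6$ and the triple cover $C_3.A_6$ (GAP ID $[1080,260]$); these two are then excluded by the character-table/Table~\ref{tab:ab} method, exactly as in the $Q_8$ argument. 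Your route via the $[36,7]$-subgroup of $A_6$ is a legitimate alternative for disposing of $A_6$ itself (provided you verify the GAP ID of the Sylow-$3$ normaliser carefully), but it does not by itself handle $C_3.A_6$, which contains no $A_6$.
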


\begin{proof}
By Theorems \ref{thm:sylow2}, \ref{thm:sylow3}, \ref{thm:sylow5}, $a\le 4$, $b\le 5$, $c\le 1$. A finite group with cyclic Sylow 2-subgroups is solvable (see \cite[Proposition 8.31]{OY19}). By Burnside theorem, a finite group of order $p^\alpha q^\beta$ is solvable, where $p,q$ are two distinct prime numbers and $\alpha,\beta$ are non-negative integers. Thus, we may assume $a\in \{2,3\}$, $b\in \{1,2,3,4,5\}$, $c=1$. By Proposition \ref{pp:3^5}, we may assume $b<5$. Thus,  we only need to consider the following cases for $|G|$: $60$, $120$, $180$, $360$, $540$, $1080$, $1620$. Then, by sub-test, we are reduced to ruling out the following two non-solvable groups:  $A_6$, $C_3.A_6$ ([1080,260]).  Then by considering character tables of these two groups and by using Table \ref{tab:ab}, one can rule out these 2 groups (see the proof of Theorems \ref{thm:ab}, \ref{thm:sylow2} and \ref{thm:sylow3}), and we leave the details to the readers. \end{proof}

\begin{theorem}\label{thm:nsol>2000}
Let $G\subset \Aut(X)$ be a non-solvable group of order $2^a 3^b 5^c> 2000$, where $a,b,c\ge 0$. Then, as an abstract group, $G$ is isomorphic to a subgroup of $\Aut(X_i)$ for some $i\in \{ 1,...,6\}$.
\end{theorem}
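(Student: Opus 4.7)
The plan is to first narrow down $|G|$ using Sylow constraints, and then handle the single remaining case. By Theorems \ref{thm:sylow2}, \ref{thm:sylow3}, \ref{thm:sylow5} we have $a\le 4$, $b\le 5$, $c\le 1$. Since $G$ is non-solvable, its Sylow $2$-subgroup is non-cyclic (a finite group with cyclic Sylow $2$-subgroup is solvable; see \cite[Proposition 8.31]{OY19}), so by Theorem \ref{thm:sylow2} it belongs to $\{C_2^2,\ C_4\times C_2,\ D_8\}$, forcing $a\in\{2,3\}$. Burnside's $p^{\alpha}q^{\beta}$ theorem requires $|G|$ to involve three distinct primes, so $c=1$ and $b\ge 1$; the case $b=5$ is handled by Proposition \ref{pp:3^5}, which already puts $G$ inside $\Aut(X_1)$. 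Hence the only remaining possibility with $|G|>2000$ is $|G|=2^3\cdot 3^4\cdot 5=3240$.

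For this value I will prove the stronger statement that $G$ cannot embed in $\Aut(X)$ at all; the conclusion of the theorem then holds vacuously. (This is consistent with the target list: only $\Aut(X_1)$ has order $\ge 3240$, and since its $S_5$-action on $C_3^4$ is the irreducible deleted permutation module over $\F_3$, any quotient of $\Aut(X_1)$ onto a group of order $<81$ must kill $C_3^4$; but $S_5=\Aut(X_1)/C_3^4$ admits neither a $C_3$- nor an $S_3$-quotient, so $\Aut(X_1)$ has no subgroup of index $3$.) The non-abelian simple groups whose orders divide $3240$ and involve only the primes $2,3,5$ are $A_5$ and $A_6$, so at least one is a composition factor of $G$, and I split along these two sub-cases.

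Suppose $A_6$ is a composition factor, and let $R=R(G)$ be the solvable radical. The socle of $G/R$ is a product of non-abelian simple groups containing an $A_6$ factor; the order constraint $60^a\cdot 360^b\mid 3240$ with $b\ge 1$ forces this socle to equal $A_6$. Thus $G/R\hookrightarrow\Aut(A_6)$, and among the subgroups of $\Aut(A_6)$ containing $A_6$ only $A_6$ itself has order dividing $3240$, so $G/R\cong A_6$ and $|R|=9$. Since $R$ is abelian and $A_6$ admits no non-trivial homomorphism into $\Aut(R)$ (of order at most $48$), $R$ is central in $G$, and the perfect core $G^{\infty}$ is a perfect central extension of $A_6$. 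The Schur multiplier of $A_6$ is $C_6$ and the universal cover $6.A_6$ has order $2160\nmid 3240$, so $|G^{\infty}|\in\{360,1080\}$ and $G^{\infty}\cong A_6$ or $3.A_6$; both have already been excluded by Theorem \ref{thm:nsol2000}.

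Otherwise the only non-abelian composition factor of $G$ is $A_5$; then the socle of $G/R$ equals $A_5^k$ with $60^k\mid 3240$, forcing $k=1$, so $G/R\hookrightarrow\Aut(A_5)=S_5$ and $|R|\in\{27,54\}$. In either case the Sylow $3$-subgroup $R_3$ of $R$ has order $27$ and is characteristic in $R$, hence normal in $G$. A case-by-case check on the five isomorphism classes of groups of order $27$ shows that $5\nmid|\Aut(R_3)|$ (the respective automorphism group orders being $18$, $108$, $11232$, $432$, $54$), so by the coprime action theorem any Sylow $5$-subgroup $P_5$ of $G$ centralizes $R_3$. Since every group of order $27$ contains an abelian subgroup $A$ of order $9$ (a standard $p^3$-group fact), $A\times P_5$ is an abelian subgroup of $G$ of order $45$, isomorphic to $C_{45}$ or $C_5\times C_3^2$; but no group of order $45$ appears in Theorem \ref{thm:ab}, contradicting $G\subset\Aut(X)$. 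The main obstacle is this last case, where both the identification of the normal subgroup $R_3$ and the coprime-action argument are essential.
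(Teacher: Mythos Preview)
Your argument is correct and reaches the same conclusion as the paper (namely that no such $G$ of order $3240$ exists), but the route is genuinely different. The paper picks a maximal proper normal subgroup $N$ and looks at the simple quotient $M=G/N\in\{C_2,C_3,C_5,A_5,A_6\}$; in each case it simply lifts a well-chosen subgroup of $M$ back to $G$ and invokes the already-established Theorems \ref{thm:sol2000} and \ref{thm:nsol2000} on that smaller subgroup (e.g.\ for $M\cong A_5$ it takes the preimage of a $C_5$ to get a solvable subgroup of order $270$; for $M\cong A_6$ the preimage of an $A_5$ gives a non-solvable subgroup of order $540$). You instead work from the bottom up via the solvable radical $R$: in the $A_6$ case you invoke the Schur multiplier of $A_6$ to pin down the perfect core $G^\infty$ as $A_6$ or $3.A_6$, and in the $A_5$ case you compute $|\Aut(R_3)|$ for all five groups of order $27$ to force $P_5$ to centralise $R_3$, producing an abelian subgroup of order $45$ that contradicts Theorem \ref{thm:ab} directly. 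The paper's approach is shorter and needs no external input beyond the earlier theorems in the paper, whereas yours is more structural and self-contained in the sense that the $A_5$ branch appeals only to the abelian classification rather than to the full solvable case Theorem \ref{thm:sol2000}.
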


\begin{proof}
As in the proof of Theorem \ref{thm:nsol2000}, we may assume $a\in \{2,3\}$, $b\in \{1,2,3,4\}$, $c=1$. Then, by $|G|>2000$, we only need to consider the case $|G|=2^3 3^4 5=3240$. Suppose $|G|=3240$. Let $N$ be a maximal proper normal subgroup of $G$. Consider the following exact sequence $$1\longrightarrow N \longrightarrow G \longrightarrow M \longrightarrow 1.$$ By choice of $N$, $M$ is a finite simple group. By classification of finite simple groups, $M\cong C_2$, $C_3$, $C_5$,  $A_5,$ or $A_6$. 

Suppose $M\cong C_2$ (resp. $M\cong C_3$). Then $N\subset \Aut(X)$ is non-solvable and $|N|=1620$ (resp.  $|N|=1080$), a contradiction to Theorem \ref{thm:nsol2000} (in fact, for any $1\le i\le 6$, $\Aut(X_i)$ does not contain a non-solvable subgroup of order $1620$ (resp. $1080$)).

Suppose $M\cong C_5$. Then $|N|=2^3 3^4$ and $N$ is solvable. Then $G$ is solvable, a contradiction.

Suppose $M\cong A_5$. Then $|N|=2\cdot 3^3$. Since $C_5$ is a subgroup of $M$, it follows that $G$ contains a (solvable) subgroup of order $2\cdot 3^3 5=270$, a contradiction to Theorem \ref{thm:sol2000}.

Suppose $M\cong A_6$. Then $|N|=3^2$. Since $A_5$ is a subgroup of $M$, it follows that $G$ contains a (non-solvable) subgroup of order $2^2 3^3 5=540$, a contradiction to Theorem \ref{thm:nsol2000}. 

This completes the proof of the theorem. \end{proof}

\section{Proof of main Theorem}\label{ss:proofmainthm}

In this section, we prove our main Theorem (Theorem \ref{thm:Main}).

Let $G\subset \Aut(X)$ be a subgroup, where $X$ is a smooth cubic threefold. Then, by Proposition \ref{pp:porder} and Theorems \ref{thm:sylow2}, \ref{thm:sylow3}, \ref{thm:sylow5}, \ref{thm:sylow11}, it follows that $$|G|=2^{a_2}3^{a_3}5^{a_5}11^{a_{11}} \, ,$$ where $0\leq a_2\leq 4$, $ 0\leq a_3\leq 5$, $0\leq a_5\leq 1$, $0\leq a_{11}\leq 1$.

If $a_{11}$ is not zero, by Theorem \ref{thm:sylow11}, $G$ is isomorphic to a subgroup of $\Aut(X_5)$.

If $a_{11}=0$, then, by Theorems \ref{thm:sol2000}, \ref{thm:sol>2000}, \ref{thm:nsol2000}, \ref{thm:nsol>2000}, $G$ is isomorphic to a subgroup of $\Aut(X_i)$ for some $1\le i\le 6$. This completes the proof of Theorem \ref{thm:Main}.

\appendix

\section{Computer program GAP}\label{ap:GAP}
In this paper,  we extensively use the mathematical software GAP (\cite{GAP2014}). In GAP library, groups of order $\leq 2000$ (except 1024) are stored. All the information we need (structure descriptions, lists of subgroups, character tables, etc.) of these groups are included in GAP. 

A terminology used in GAP: $\SmallGroup(a,b)$:= the $b$-th group of order $a$ (here $a\leq 2000$ and $a\neq 1024$). For example, by classification, up to isomorphism, there are exactly five different groups of order $8$: $C_8,C_4\times C_2, D_8,Q_8,C_2^3$.  In GAP, these five groups are stored in a specific order. In fact,  $\SmallGroup(8,1)\cong C_8$,  $\SmallGroup(8,2)\cong C_4\times C_2$,  $\SmallGroup(8,3)\cong D_8$,  $\SmallGroup(8,4)\cong Q_8$,  $\SmallGroup(8,5)\cong C_2^3$. 

Throughout this paper, if no confuse causes, we use the following convention: Let $0< a\leq 2000$ and $a\neq 1024$. Suppose, up to isomorphism, there are $k_a$ many different finite groups of order $a$. Let $0<b\leq k_a$. Then we denote by $[a,b]$ a group isomorphic to $\SmallGroup(a,b)$. In fact, in GAP, $[a,b]$ is regarded as the \lq\lq{}ID\rq\rq{} of $\SmallGroup(a,b)$. We also call $[a,b]$ the \lq\lq{}GAP ID\rq\rq{} of groups isomorphic to $\SmallGroup(a,b)$. For example, the group $[8,3]$ is isomorphic to the dihedral group $D_8$. When the group structure is clear from the structure description of a group (e.g., $C_2^3$), we often omit its GAP ID.

\section{The table of abelian subgroups}\label{ap:tab}

 In the last column of  Table \ref{tab:ab}, blank means we do not adress uniqueness of $X$ ( so could be either unique or not unique).

 When Sylow 3-subgroup $H_3$ is not trivial, $H$ may admit several $F$-liftings (even up to conjugation in $\GL(5,\C)$). However, if $K_1$ and $K_2$ are two $F$-liftings of $H$, then $\langle K_1, \xi_3 I_5 \rangle=\langle K_2, \xi_3 I_5 \rangle\subset \GL(5,\C)$. Thus, a cubic monomial is invariant by $K_1$ if and only if it is invariant by $K_2$. Because of this reason, for simplicity, in Table \ref{tab:ab}, when $H_3$ is not a trivial group, we do not give all possible $F$-liftings of $H$ (but, at least one $F$-fifting of $H$ is given).

 Note that one can determine fixed point loci $X^H$ using ideas and results in the current paper (cf. \cite{Yu17} ). 

\begin{table}
\caption{$F$-lifting of abelian subgroups of $\Aut(X)$ }\label{tab:ab}
\begin{center}
{\footnotesize

\begin{tabular}{|c|c| p{8cm}|c|}

\hline 
No. &$H\subset \Aut(X)$  & generator(s) of an $F$-lifting $\tilde{H}$ of $H$  & Uniqueness of $X$ \\
\hline 
1& $C_2$ & $\Diag(-1,(-1)^a,1,1,1)$ $a=0,1$ & \\
\hline 
2& $C_3$ &$\Diag(1,1,1,\xi_3,\xi_3^a)$ $a=0,1,2$   & \\
\hline
3&$C_3$ &$\Diag(1,1,\xi_3,\xi_3,\xi_3^2)$   & \\
\hline
4&$C_4$ &$\Diag(\xi_4,-1,1,\xi_4^a,1)$ $a=0,2,3$   & \\
\hline 
 5&$C_2^2$ &$\Diag(-1,(-1)^a,1,1,1)$, $\Diag(1,(-1)^a,-1,1,1)$, $a=0,1$   & \\
 
\hline 
6&$C_5$ &$\Diag(\xi_5,\xi_5^3,\xi_5^4,\xi_5^2,1)$   & \\
\hline 
7& $C_2\times C_3$ &$\Diag(-1,1,-1,1,1)$, $\Diag(1,1,1,\xi_3,1)$   & \\
\hline 
 8&$C_2\times C_3$ &$\Diag(-1,1,-1,1,1)$, $\Diag(1,1,\xi_3,\xi_3,\xi_3^a)$, $a=0,1,2$    & \\
\hline 
 9&$C_2\times C_3$ &$\Diag(-1,1,1,1,1)$, $\Diag(1,1,\xi_3,\xi_3^a,\xi_3^b)$, $0\le a \le b\le 2$   & \\
\hline 
10&$C_8$ &$\Diag(\xi_8,\xi_8^6,-1,1,\xi_8^a)$, $a=0,2$   & \\
\hline 
 11&$C_4\times C_2$ &$\Diag(\xi_4,-1,1,1,1)$, $\Diag(1,1,1,-1,1)$   & \\
\hline 
12&$C_9$ &$\Diag(\xi_9,\xi_9^7,\xi_9^4,\xi_3^a,\xi_3^b)$, $0\le a\le b\le 2$   & $X\cong X_1$ \\
\hline 
 13&$C_3^2$ &$\Diag(1,\xi_3,1,\xi_3^a,\xi_3^b)$, $\Diag(1,1,\xi_3,\xi_3^c,\xi_3^d)$, $0\le a\le b\le 2$, $0\le c\le d\le 2$   & \\
\hline 
14&$C_{11}$ &$\Diag(\xi_{11},\xi_{11}^9,\xi_{11}^4,\xi_{11}^3,\xi_{11}^5)$ & $X\cong X_5$ \\
\hline 
 15&$C_4\times C_3$ &$\Diag(\xi_4,-1,1,1,1)$, $\Diag(1,1,1,\xi_3,\xi_3^{a})$ $a=0,1,2$  & \\
\hline 
16& $C_4\times C_3$ &$\Diag(\xi_4,-1,1,1,-1)$, $\Diag(1,1,1,\xi_3,\xi_3)$  & \\
\hline 
 17&$C_4\times C_3$ &$\Diag(\xi_4,-1,1,1,\xi_4^3)$, $\Diag(1,1,1,\xi_3,1)$  & \\
\hline 
18& $C_2^2\times C_3$ &$\Diag(-1,1,1,1,1)$, $\Diag(1,1,-1,1,1)$, $\Diag(1,1,\xi_3,\xi_3,\xi_3^a)$, $a=0,1,2$  & \\
\hline 
19&$C_2^2\times C_3$ &$\Diag(-1,1,1,1,(-1)^a)$, $\Diag(1,1,-1,1,(-1)^a)$, $\Diag(1,1,1,\xi_3,1)$, $a=0,1$  & \\
\hline 
20& $C_5\times C_3$ &$\Diag(\xi_5,\xi_5^3,\xi_5^4,\xi_5^2,1)$, $\Diag(1,1,1,1,\xi_3)$  & $X\cong X_6$\\
\hline 
21&$C_{16}$ &$\Diag(\xi_{16},\xi_{16}^{-2},\xi_{16}^4,-1,1)$  & $X\cong X_4$ \\
\hline 
22& $C_9\times C_2$ &$\Diag(\xi_9,\xi_9^7,\xi_9^4,\xi_3^a,\xi_3^a)$, $\Diag(1,1,1,-1,1)$, $a=0,1,2$   & $X\cong X_1$ \\
\hline 
23&$C_2\times C_3^2$ &$\Diag(-1,1,1,1,1)$, $\Diag(1,1,\xi_3,1,1)$, $\Diag(1,1,1,\xi_3,\xi_3^a)$, $a=0,1,2$  & $X\cong X_1$ if $a=1$ \\
\hline 
24&$C_2\times C_3^2$ &$\Diag(-1,1,1,1,1)$, $\Diag(1,1,\xi_3,1,\xi_3^a)$, $\Diag(1,1,1,\xi_3,\xi_3^a)$, $a=1,2$  & $X\cong X_1$ if $a=1$ \\
\hline 
25&$C_2\times C_3^2$ &$\Diag(-1,1,1,1,-1)$, $\Diag(1,1,\xi_3,1,1)$, $\Diag(1,1,1,\xi_3,\xi_3)$  & $X\cong X_1$ \\
\hline 
 26&$C_8\times C_3$ &$\Diag(\xi_8,\xi_8^6,-1,1,1)$, $\Diag(1,1,1,1,\xi_3)$   & $X\cong X_3$\\
\hline 
27& $C_4\times C_2\times C_3$ & $\Diag(\xi_4,-1,1,1,1)$,$\Diag(1,1,1,-1,1)$, $\Diag(1,1,1,\xi_3,\xi_3)$   & $X\cong X_2$ \\
\hline 
 28&$C_9\times C_3$ &$\Diag(\xi_9,\xi_9^7,\xi_9^4,1,\xi_3^a)$, $\Diag(1,1,1,\xi_3,\xi_3^b)$, $0\le a\le 2$, $b=0, 2$   & $X\cong X_1$ \\
\hline 
29&$C_3^3$ &$\Diag(1,\xi_3,1,1,\xi_3^a)$, $\Diag(1,1,\xi_3,1,\xi_3^b)$, $\Diag(1,1,1,\xi_3,\xi_3^c)$, $0\le a\le b\le c\le 2$  &  \\
\hline 
30&$C_4\times C_3^2$ &$\Diag(\xi_4,-1,1,1,1)$, $\Diag(1,1,1, \xi_3,1)$, $\Diag(1,1,1,1,\xi_3)$  &  $X\cong X_2$\\
\hline 
31&$C_2^2 \times C_3^2$ & $\Diag(-1,1,1,1,1)$,$\Diag(1,-1,1,1,1)$, $\Diag(1,\xi_3,1, \xi_3,1)$, $\Diag(1,1,1,1,\xi_3)$  & $X\cong X_1$ \\
\hline 
32&$C_2\times C_3^3$ &$\Diag(-1,1,1,1,1)$,$\Diag(1,1,\xi_3,1,1)$, $\Diag(1,1,1, \xi_3,1)$, $\Diag(1,1,1,1,\xi_3)$  & $X\cong X_1$ \\
\hline 
33&$C_3^4$ &$\Diag(1,\xi_3,1,1,1)$,$\Diag(1,1,\xi_3,1,1)$, $\Diag(1,1,1, \xi_3,1)$, $\Diag(1,1,1,1,\xi_3)$  & $X\cong X_1$ \\
\hline 

\end{tabular}
}

\end{center}

\vspace{3mm}

\end{table}


\end{document}